\documentclass{article}
\usepackage[utf8]{inputenc}
 \usepackage{amsmath, amsthm, amssymb, enumerate, multicol, chngcntr}
 \usepackage{xcolor}

 \theoremstyle{plain}
 \newtheorem{Thm}{Theorem}[section]
 
 \newtheorem{Cor}[Thm]{Corollary}
 \newtheorem{Lem}[Thm]{Lemma}

 \theoremstyle{definition}
 \newtheorem{Def}[Thm]{Definition}
 
 \newtheorem{quest}[Thm]{Question}

 \counterwithin{equation}{section}

 \newcommand{\FG}{F[G]}
 \newcommand{\FH}{F[H]}
 \newcommand{\FK}{F[K]}
 \newcommand{\FZ}{F[\Z]}
 \renewcommand{\S}{\mathfrak{S}}
 \newcommand{\T}{\mathfrak{T}}
 \newcommand{\D}{\mathcal{D}}
 
 \renewcommand{\H}{\mathcal{H}}
 \newcommand{\Z}{\mathcal{Z}}

 \DeclareMathOperator{\supp}{supp}

 \DeclareMathOperator*{\Aut}{Aut}

 \title{On Schur Rings over Infinite Groups III}
 \author{Nicholas Bastian\footnote{Nicholas Bastian, Southern Utah University, nicbastian16@gmail.com}, Andrew Misseldine\footnote{Corresponding Author: Andrew Misseldine, Southern Utah University, andrewmisseldine@suu.edu, (telephone) 435-865-8228}}

 \date{\today}

\begin{document}

 \maketitle

 \begin{abstract}
 In the paper, we develop further the properties of Schur rings over infinite groups, with particular emphasis on the virtually cyclic group $\Z\times\Z_p$, where $p$ is a prime. We provide structure theorems for primitive sets in these Schur rings. In the case of Fermat and safe primes, a complete classification theorem is proven, which states that all Schur rings over $\Z\times\Z_p$ are traditional. We also draw analogs between Schur rings over $\Z\times\Z_p$ and partitions of difference sets over $\Z_p$.
 \end{abstract}

 \textbf{Keywords}:
 Schur ring, group ring, cyclic group, virtually cyclic group, association scheme, difference set\\

 \textbf{MSC Classification}:
 20c07, 
 16s34,  
 20e22, 
 05e30, 
 05b10 

 \section{Introduction}\label{sec:schur}
 The paper continues developing the theory of Schur rings over infinite groups begun in \cite{InfiniteI, Humphries19}. Schur rings (or S-rings) were first studied by Wielandt \cite{Wielandt49} but only in the case of finite groups. Schur rings are of great interest in algebraic combinatorics because of their connections to association schemes, Cayley graphs, difference sets, and related objects. Let $\Z=\langle z\rangle$ be the infinite cyclic group, written multiplicatively, and, for any positive integer $n$, let $\Z_n=\langle a\rangle$ be the cyclic group of order $n$, also written multiplicatively. In \cite{InfiniteI}, it was shown that there are exactly two Schur rings over $\Z$ and countably many Schur rings over $\Z\times \Z_2$ belonging to one of four types. This paper will extend these results to $\Z\times \Z_n$.

 We remind the reader of some important notation and terminology introduced in \cite{InfiniteI}. Let $\FG$ denote the group algebra over group $G$ and field of coefficients $F$, which we assume has characteristic $0$. Suppose $\alpha=\sum_{g \in G} \alpha_gg
 \in \FG$. Then define $\alpha^{\ast}=\sum_{g\in G} \alpha_g g^{-1}$, which gives an involution on $\FG$, and the \emph{Hadamard product} $\alpha\circ \beta=\sum_{g\in G} \alpha_g\beta_g g$, which gives a commutative, associative, bilinear multiplication on $\FG$. For any finite subset $C\subseteq G$, define $\overline{C} = \sum_{g\in C} g$, called a \emph{simple quantity}. Similarly, let $C^*$ denote the set of inverses of elements of $C$. Note that for subsets $C, D\subseteq G$, we have $\overline{C}\circ \overline{D}=\overline{C\cap D}$ and, if $C$ and $D$ are disjoint, $\overline{C}+\overline{D}=\overline{C\cup D}$. 
 
 Let $\D$ be a partition of the group $G$ of finite support, that is, if $C\in \D$ then $|C| < \infty$. Let $\S=\text{Span}_F\{\overline{C} \mid C\in \D\} \subseteq \FG$. We say that $\S$ is a \emph{Schur ring} if additionally
 \begin{enumerate}[(i)]
 \item $\{1\} \in \D$
 \item if $C \in \D$, then $C^* \in \D$
 \item\label{item:sring} for all $C, D \in \D$,
 $\overline{C}\,\overline{D} = \sum_{E\in \D} \lambda_{CDE}\overline{E}$,
 where all but finitely many $\lambda_{CDE}$ are equal to 0. (The coefficient $\lambda_{CDE}$ is called the \emph{multiplicity} of $\overline{E}$ in the product $\overline{C}\, \overline{D}$.) 
 \end{enumerate} 
Note that Schur rings are subrings of $\FG$ closed under $\circ$ and $*$.

  Leung and Man \cite{LeungII, LeungI} classified all Schur rings over cyclic groups of finite order. Particularly, they showed that all Schur rings over finite cyclic groups are either trivial, automorphic, direct products, or wedge products (see Section \ref{sec:Prelim} for these definitions). We call Schur rings \emph{traditional} if it is one of these four types. Similarly, we say that a group $G$ is \emph{traditional} if all of its Schur rings are traditional. Hence, Leung and Man showed that finite cyclic groups are traditional. In \cite{InfiniteI}, it was shown that $\Z$ and $\Z\times \Z_2$ were traditional. In the case of $\Z$, there are only two Schur rings, both automorphic, namely the discrete and symmetric Schur rings $\FZ$ and $\FZ^\pm$, respectively. For $\Z\times \Z_2$, it was shown that all Schur rings are of one of the following forms:
 \begin{enumerate}[(i)]
 \item $F[\Z_2]\wedge F[\Z]$ or $F[\Z_2]\wedge F[\Z]^\pm$,
 \item $F[H\times \Z_2] \wedge F[\Z]$ or $F[H\times \Z_2]^\pm \wedge F[\Z]^\pm$,
 \item $F[H\times \Z_2]^{\langle \psi\rangle}\wedge F[\Z]^\pm$ or $F[\Z\times\Z_2]^{\langle \psi\rangle}$,
 \item $F[\Z\times\Z_2]$ or $F[\Z\times\Z_2]^\pm$,
 \end{enumerate} where $1<H \le \Z$ and $\psi : \Z\times \Z_2 \to \Z\times \Z_2$ is the automorphism induced by the relation $\psi : z\mapsto az^{-1}$, $a\mapsto a$. 

 In this paper, we prove an analogous result for the group $\Z\times \Z_p$, where $p$ belongs to one of two families of odd primes, namely the Fermat and safe primes. A prime $p$ is called a \emph{Fermat prime}\footnote{OEIS sequence A019434.} if $p=2^k+1$. Necessarily, $k=2^\ell$, that is, $p=2^{2^\ell}+1$. There are only five known Fermat primes, namely $3$, $5$, $17$, $257$, and $65537$. It is conjectured that these are the only Fermat primes. We say a prime $p$ is a \emph{safe prime}\footnote{OEIS sequence A005385. Note that the associated prime $q$ to the safe prime $p=2q+1$ is called a \emph{Sophie Germain prime}. Safe primes received their name because of their historical usage in cryptography.} if $p=2q+1$ where $q$ is itself a prime number. The first few safe primes are $5$, $7$, $11$, $23$, $47$, $59$, $83$, $107$, $167$. It is conjectured that there are infinitely many safe primes. 

 We now present the main result.
 \begin{Thm}\label{thm:main} Let $p$ be a Fermat or safe prime. Then all Schur rings over the virtually cyclic group $\Z\times \Z_p$ are of one of the following forms:
 \begin{enumerate}[(i)]
 \item $F[\Z_p]^\H\wedge F[\Z]$ or $F[\Z_p]^\H\wedge F[\Z]^\pm$,
 \item $F[H\times \Z_p]^\H \wedge F[\Z]$ or $F[H\times \Z_p]^\H \wedge F[\Z]^\pm$,
 \item $F[\Z\times\Z_p]^\H$,
 \end{enumerate}
 where $H\le \Z$ and $\H\le \Aut(\Z\times \Z_p)\cong GA(1,p)\times \Z_2$, where $GA(1,p)$ denotes the general affine group over a finite vector space of order $p$. Hence, $\Z\times \Z_p$ is traditional and has only countably many Schur rings.
 \end{Thm}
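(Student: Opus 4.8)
The plan is to use the torsion subgroup $K=\{1\}\times\Z_p$ as a universal imprimitivity system, and then to combine the structure theorems for primitive sets proved above with the classifications of Schur rings over $\Z$ and over $\Z_p$. Let $\S$ be a Schur ring over $G=\Z\times\Z_p$. I would first observe that $K$ is always an $\S$-subgroup: elements of a common basic set have a common order, so $K$ --- being the set of all torsion elements of $G$ --- is a union of basic sets and $\overline{K}\in\S$ (cf.\ \cite{InfiniteI}). The restriction $\S_K=\S\cap\FK$ is then a Schur ring over $\Z_p$, and since $p$ is prime the classical fact that every Schur ring over a group of prime order is automorphic gives $\S_K=F[\Z_p]^{\H_0}$ for a unique $\H_0\le\Aut(\Z_p)\cong\Z_{p-1}$. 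Dually, the quotient $\S^{G/K}$ is a Schur ring over $G/K\cong\Z$, hence equals $\FZ$ (discrete) or $\FZ^\pm$ (symmetric).

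Since $\S^{G/K}$ is discrete or symmetric, every basic set not contained in $K$ lies in a single coset $z^nK$ in the discrete case, or in $z^nK\cup z^{-n}K$ for a unique $n>0$ in the symmetric case. For each $n$ let $\P_n$ be the partition of $\Z_p$ obtained from the basic sets inside $z^nK$ by translating back by $z^{-n}$ (with the appropriate pairing of two copies of $\Z_p$ in the symmetric case); thus $\P_0$ is the orbit partition of $\H_0$, and the problem reduces to determining which families $(\P_n)_n$ occur.

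The key step, which I expect to be the main obstacle, is to show that the $\P_n$ are severely constrained. Expanding the structure equations $\overline{C}\,\overline{C^{*}}=\sum_E\lambda\,\overline{E}$ and $\overline{C}\,\overline{D}=\sum_E\lambda\,\overline{E}$ for basic sets supported on torsion cosets translates into the statements that the blocks of each $\P_n$ have difference multisets constant on $\H_0$-orbits and that $\P_n$, $\P_m$, $\P_{n+m}$ are mutually compatible --- precisely the language of partitions of $\Z_p$ into unions of difference sets. It is here that the Fermat/safe hypothesis enters: the subgroup lattice of $\Aut(\Z_p)\cong\Z_{p-1}$ is then a chain (Fermat) or a diamond (safe), rigid enough to force every admissible partition of $\Z_p$ to be the orbit partition of a single subgroup $\H_1\le\Aut(\Z_p)$, and matching orbit sizes forces $\H_1=\H_0$. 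Feeding this through the compatibility in $n$ and the primitive-set structure theorems then pins down $(\P_n)$: there is a subgroup $H\le\Z$ such that $\P_n$ is the $\H_0$-orbit partition when $z^n\in H$ and is the one-block partition otherwise (in the symmetric case, together with the inversion automorphism of $G$, which couples $z^nK$ with $z^{-n}K$). Without the Fermat/safe condition, $\Z_p$ admits partitions into difference sets that are not orbit partitions, hence non-traditional Schur rings, so the real difficulty is reconciling this local rigidity mod $p$ with the global periodicity coming from the $\Z$-direction.

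Finally, I would read off the three types. If $H=1$, the basic sets outside $K$ are full $K$-cosets, so $\S=F[\Z_p]^{\H_0}\wedge\FZ$ or $F[\Z_p]^{\H_0}\wedge\FZ^\pm$ --- case (i). If $1\ne H\ne\Z$, the basic sets inside $H\times\Z_p$ assemble into an automorphic ring $F[H\times\Z_p]^{\H}$ while those outside are full $K$-cosets, giving $F[H\times\Z_p]^{\H}\wedge\FZ$ or $F[H\times\Z_p]^{\H}\wedge\FZ^\pm$ --- case (ii). If $H=\Z$, every basic set is an $\H$-orbit and $\S=F[\Z\times\Z_p]^{\H}$ --- case (iii); in the last two cases $\H$ is generated by the action of $\H_0$ on $\Z_p$ together with, in the symmetric case, the inversion of $\Z$. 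A direct computation, using that every automorphism of $G$ preserves $K$, gives $\Aut(\Z\times\Z_p)\cong GA(1,p)\times\Z_2$. Traditionality is then immediate since each listed ring is automorphic or a wedge of traditional rings, and countability follows because there are countably many $H\le\Z$ and, for each, only finitely many choices of $\H$ and of wedge configuration.
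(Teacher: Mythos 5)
Your skeleton matches the paper's: torsion subgroup $\Z_p$ as the universal $\S$-subgroup, $T(\S)$ automorphic over $\Z_p$, quotient discrete or symmetric over $\Z$, and a reduction to partitions of the cosets $z^n\Z_p$. But the step you yourself flag as ``the main obstacle'' is not proved; it is replaced by an appeal to the subgroup lattice of $\Aut(\Z_p)\cong\Z_{p-1}$ being a chain (Fermat) or a diamond (safe), which you claim is ``rigid enough to force every admissible partition of $\Z_p$ to be the orbit partition of a single subgroup.'' That is not an argument, and it is not the mechanism that actually works. What the Fermat/safe hypothesis buys is control of the \emph{divisors} of $p-1$: the common length $m$ of the nonidentity $T(\S)$-classes is $1$, $2^\ell$, $q$, or $2q$, and each case needs its own, nontrivial treatment. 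The prime-power case $m=2^\ell$ requires a delicate multiplicity count via Lemma~\ref{lem:sizes} (this is most of Theorem~\ref{cor:relprime2}); the case $m=2q$ forces $T(\S)$ trivial, whence each block $A$ of the coset partition is a difference set with $|A|(|A|-1)=\lambda(p-1)=2\lambda q$, restricting $|A|$ to $\{1,q,q+1,p\}$ and reducing to the situation where a coset splits into exactly two primitive sets, which again needs a separate argument (Theorem~\ref{thm:twosets}, using tycoons). None of this follows from the shape of the lattice of subgroups of $\Z_{p-1}$.

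You are also missing the two structural inputs that legitimize reducing to a single coset. First, the tycoon/stabilizer argument (Lemma~\ref{cor:left-stab} feeding into Corollary~\ref{lem:trivial}) showing that every primitive set lying outside $K\times\Z_p$, where $K$ is the subgroup of $\Z$ with $[K:H]=p$ over the maximal $\S$-subgroup $H\le\Z$, is a union of $\Z_p$-cosets; this is what produces the wedge decompositions in cases (i) and (ii) and is not a consequence of your ``compatibility of $\P_n$, $\P_m$, $\P_{n+m}$.'' Second, the Frobenius-map argument (Theorem~\ref{lem:primfresh}) showing that all the relevant cosets carry the same partition, so that it suffices to analyze $z\Z_p$ or $\{z,z^{-1}\}\Z_p$. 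Finally, your parenthetical claim that without the Fermat/safe condition $\Z_p$ admits partitions into difference sets that are not orbit partitions, ``hence non-traditional Schur rings,'' is unsupported and in tension with Section~\ref{sec:ds}, where the existence of a non-trivial difference partition over $\Z_p$ is posed as an open question; no such example is known even for $p=31$ or $p=307$.
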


 The proof of Theorem \ref{thm:main} will be found in Section \ref{sec:proof}. Section \ref{sec:Prelim} contains important properties of Schur rings over infinite groups relevant to this proof. Section \ref{sec:cyclic} considers general structure theorems about primitive sets of Schur rings over $\Z\times \Z_n$ and $\Z\times \Z_p$, where $n$ is any positive integer and $p$ is a prime. Section \ref{sec:ds} concludes with remarks relating Schur rings over $\Z\times\Z_n$ with partitions of difference sets.\\

 \noindent\textbf{Acknowledgements}: All calculations made in preparation of this paper were made using MAGMA \cite{Magma}. The authors are grateful to Stephen Humphries for useful conversations and to the anonymous referees for their most helpful comments.

 \section{Properties of Schur Rings}\label{sec:Prelim}
 In this section, we gather important terminology and properties about Schur rings in general that will be useful for forthcoming proofs. Many of these were known to Wielandt \cite{Wielandt64} in his early work, although they have been extended to the infinite case. Most of these properties will be mentioned without proof.\footnote{These omitted proofs can be found in \cite{InfiniteI}.}

 The partition associated to a Schur ring $\S$ will be denoted $\D(\S)$. The elements of $\D(\S)$ are called the \emph{$\S$-classes} (or \emph{primitive sets} of $\S$). Essentially, an element of the group algebra belongs to a Schur ring $\S$ only if it has constant coefficients across each $\S$-class. We say that a subset $C\subset G$ is an \emph{$\S$-subset} if $C$ is a union of the $\S$-classes. When $G$ is finite, this is equivalent to $\overline{C}\in\S$. We say a subset $H\subseteq G$ is an \emph{$\S$-subgroup} if $H$ is an $\S$-subset and a subgroup of $G$. For any $\S$-subgroup $H$, let $\S_H := \S\cap \FH$, called a \emph{Schur subring}. For any Schur ring, the set of $\S$-subsets forms a lattice closed under intersections and unions and the set of Schur subrings forms a lattice closed under intersections and joins. The associated partitions to these Schur subrings are the common coarsenings and the common refinements of $\D(\S_H)$ and $\D(\S_K)$, denoted $\S_{H\cap K}$ and $\S_{\langle H,K\rangle}$, respectively.

 For an element $\alpha\in \FG$, let  $\supp(\alpha) := \{g \mid \alpha_g\neq{0}\} \subseteq G$, which is called the \emph{support} of $\alpha$ and must necessarily be finite.  
 If $\S$ is a Schur ring over group $G$, $\alpha\in \S$, and $G_\alpha= \{g\in G \mid \alpha g= \alpha\}$ is the stabilizer subgroup, then $G_\alpha$ is an $\S$-subgroup of $G$. When $\alpha=\overline{C}$ is a simple quantity, we will denote $G_\alpha$ as $G_C$. Also, if $H=\langle \supp(\alpha) \rangle$, then $H$ is an $\S$-subgroup. 

 Define the \emph{$n$th Frobenius map} for any integer $n$ by the rule $\alpha^{(n)} := \sum _{g\in{G}} \alpha_g g^n$ whenever $\alpha= \sum _{g\in G} \alpha_g g$. The Frobenius map is a linear map $\FG\to\FG$ such that $\alpha^{(mn)} = \left(a^{(m)}\right)^{(n)}$ for any integers $n$ and $m$ and $\alpha^{(-1)} = \alpha^*$. We define the Frobenius map on subsets of $G$ analogously. For example, all the subgroups of $\Z$ can be written as $\Z^{(n)}$ for some integer $n$. The Frobenius map is very useful in determining the structure of primitive sets of Schur rings over abelian groups. In fact, if $\S$ is a Schur ring over an abelian group $G$ and $m$ is an integer coprime to the orders of all torsion elements of $G$, then for all $\alpha\in \S$ we have that $\alpha^{(m)}\in \S$. Using the Frobenius map, we see that for a Schur ring $\S$ over an abelian group $G$, if the torsion subgroup $T(G)$ has finite exponent then $T(G)$ is an $\S$-subgroup. Hence, Schur rings over abelian groups have a torsion Schur subring, denoted $T(\S)$.

 Let $\varphi : G\to H$ be a group homomorphism. Then this map linearly lifts to the group ring naturally and will be denoted by the same symbol $\varphi :\FG\to \FH$. A ring homomorphism between group rings of this form is called a \emph{Cayley homomorphism} (the Frobenius map is an example of such). 
 If $\S$ is a Schur ring over $G$ and $\ker \varphi$ is an $\S$-subgroup, then $\varphi(\S)$ is a Schur ring over $\varphi(G)$. In particular, $\D(\varphi (\S)) = \{\varphi(C) \mid C\in \D(\S)\}$.  Additionally, if $C\in\D(\S)$, then the non-zero intersection numbers with $C$ and the cosets of $\ker \varphi$ are constant, that is, $|C\cap g(\ker \varphi)|=|C\cap g'(\ker \varphi)|$ whenever these intersections are non-empty. Furthermore, if two $\S$-classes both intersect some coset of $K$, then they intersect all the same cosets of $K$. These facts imply that $\S/T(\S)$ is a Schur ring over $G/T(G)$ for any abelian group $G$. 

 
 Frobenius maps, torsion subgroups, and Cayley projections will prove to be helpful in determining the structure of the primitive sets of Schur rings over $\Z\times \Z_n$. We introduce two other important counting arguments that were not included in \cite{InfiniteI} but which will also prove useful here. 

 The first counting argument is a technique of Scott \cite{Scott} about the sizes of primitive sets. In the next two lemmas, we extend Scott's method to infinite groups. 

 \begin{Lem}[\cite{Scott} 13.8.2] \label{lem:sizes}
 Let $C$, $D$, $E \in \D(\S)$ be primitive sets of a Schur ring $\S$. Suppose that $\overline{E}$ appears in the product $\overline{C}\, \overline{D}$ with nonzero multiplicity $\lambda$. Likewise, let $\mu$ and $\nu$ be the multiplicities of $\overline{C}^*$ and $\overline{D}^*$ in the products $\overline{D}\, \overline{E}^*$ and $\overline{E}^*\overline{C}$, respectively. Then $\lambda|E|=\mu|C|=\nu|D|$.
 \end{Lem}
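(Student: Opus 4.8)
The plan is to deduce all three equalities from a single count of an auxiliary finite set of triples. The first step is to record the combinatorial content of a multiplicity. For any $g\in G$ the coefficient of $g$ in $\overline{C}\,\overline{D}\in\FG$ equals $|\{(c,d)\in C\times D : cd=g\}|$, which is finite since $C$ and $D$ are finite; by Schur ring axiom (iii) this coefficient is constant as $g$ runs over any one $\S$-class, so for every $e\in E$ we have $\lambda=|\{(c,d)\in C\times D : cd=e\}|$. Applying the same observation to the products $\overline{D}\,\overline{E}^*$ and $\overline{E}^*\overline{C}$, using $\overline{X}^*=\overline{X^*}$ and inverting group elements, I would show that $\mu$ equals the number of pairs $(d,e)\in D\times E$ with $cd=e$ (constant over $c\in C$, by axiom (iii) applied to the coefficient of $c^{-1}$), and that $\nu$ equals the number of pairs $(c,e)\in C\times E$ with $cd=e$ (constant over $d\in D$).

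Next I would introduce $T=\{(c,d,e)\in C\times D\times E : cd=e\}$, a finite set because each of $C$, $D$, $E$ is finite, and compute $|T|$ by fibering over each of the three coordinates in turn. Fibering over $e\in E$ gives $|T|=\lambda|E|$; fibering over $c\in C$ gives $|T|=\mu|C|$; fibering over $d\in D$ gives $|T|=\nu|D|$. Equating the three values yields $\lambda|E|=\mu|C|=\nu|D|$, as claimed.

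The argument is essentially bookkeeping, and the extension to infinite $G$ is automatic: since $\D(\S)$ has finite support, every set appearing is finite and every multiplicity is a nonnegative integer, so no convergence or limiting issue arises. The one place where care is needed is the translation between the three descriptions of the multiplicities — one must verify that the coefficient of $c^{-1}$ in $\overline{D}\,\overline{E}^*$ matches, after inverting all three group elements, the count of factorizations $cd=e$ with $c$ held fixed, and similarly that the coefficient of $d^{-1}$ in $\overline{E}^*\overline{C}$ matches the corresponding count with $d$ held fixed. Once these identifications are set up correctly, the triple count closes the proof with no further input.
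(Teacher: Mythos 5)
Your proof is correct and is essentially the same argument the paper relies on: the paper omits the proof and cites Scott's 13.8.2, whose proof is exactly this triple count of $\{(c,d,e)\in C\times D\times E : cd=e\}$ fibered over each coordinate, with the Schur ring axiom guaranteeing each fiber size is constant. Your added check that the finite-support hypothesis makes every set and coefficient finite is precisely the observation the authors make when noting the proof carries over to infinite groups without modification.
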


 In other words, for any $\S$-classes $C$, $D$, $E$, 
 \[\lambda_{CDE}|E| = \lambda_{DE^*C^*}|C|=\lambda_{E^*CD^*}|D|.\]

 \begin{Lem}[\cite{Scott} 13.8.3] \label{lem:primproduct}
 Let $\S$ be a Schur ring. Let $C, D\in \D(\S)$ be primitive sets such that $\gcd(|C|, |D|)=1$. Then $\overline{C}\,\overline{D}=\lambda\overline{E}$ for some primitive set $E\in \D(\S)$.
 \end{Lem}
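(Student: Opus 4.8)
The plan is to read off two constraints from the defining equation $\overline{C}\,\overline{D}=\sum_{E\in\D(\S)}\lambda_{CDE}\overline{E}$ and to show that they can be jointly satisfied only by a single term. Since $C$ and $D$ are finite, the product $\overline{C}\,\overline{D}$ is a finite $F$-linear combination of group elements, so only finitely many $\lambda_{CDE}$ are nonzero; and because the simple quantities $\overline{E}$ have pairwise disjoint supports while $\overline{C}\,\overline{D}$ has non-negative integer coefficients, each such $\lambda_{CDE}$ is a positive integer. First I would apply the augmentation homomorphism $\varepsilon\colon\FG\to F$ determined by $\varepsilon(g)=1$ for all $g\in G$; this is a ring homomorphism with $\varepsilon(\overline{C})=|C|$, so evaluating both sides yields
\[
\sum_{E}\lambda_{CDE}\,|E|=|C|\,|D|,
\]
a finite sum of positive integers.

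Next I would invoke Lemma~\ref{lem:sizes}: for every $E$ with $\lambda_{CDE}\neq 0$ we have $\lambda_{CDE}|E|=\lambda_{DE^*C^*}|C|=\lambda_{E^*CD^*}|D|$. The multiplicities $\lambda_{DE^*C^*}$ and $\lambda_{E^*CD^*}$ are non-negative integers, and since $\lambda_{CDE}|E|>0$ they must in fact be positive; hence both $|C|$ and $|D|$ divide the integer $\lambda_{CDE}|E|$. As $\gcd(|C|,|D|)=1$, it follows that $|C|\,|D|$ divides $\lambda_{CDE}|E|$, so in particular $\lambda_{CDE}|E|\ge |C|\,|D|$ for every surviving $E$.

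Combining the two facts, a sum of positive integers each of which is at least $|C|\,|D|$ equals $|C|\,|D|$, which is possible only if there is exactly one nonzero summand, say the one indexed by $E$, and for it $\lambda_{CDE}|E|=|C|\,|D|$. Therefore $\overline{C}\,\overline{D}=\lambda\overline{E}$ with $\lambda=\lambda_{CDE}$, as claimed. The argument is essentially Scott's original finite-group proof, so the only point requiring care in the present generality is that all the cardinalities in sight are genuinely finite; this holds because $\D(\S)$ has finite support by hypothesis, which makes $\varepsilon$ well defined on the relevant elements and keeps the sum over $E$ finite. Consequently the real content rests entirely on the length identity of Lemma~\ref{lem:sizes}, which is already available for infinite groups, and I do not anticipate any serious obstacle beyond bookkeeping.
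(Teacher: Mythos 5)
Your proof is correct and is essentially the argument the paper relies on: the paper omits the proof, stating it is Scott's 13.8.3 verbatim, and Scott's argument is exactly your combination of the augmentation identity $\sum_E\lambda_{CDE}|E|=|C||D|$ with the length identity of Lemma~\ref{lem:sizes} to force $|C||D|\mid\lambda_{CDE}|E|$ and hence a single surviving term. No gaps.
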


 As the proofs are the same as found in \cite{Scott} without modification, they are omitted. It should be noted that some assumptions made by Scott are omitted in these lemmas, particularly the assumption that the Schur ring be \emph{primitive},\footnote{We say that a Schur ring $\S$ is \emph{primitive} if it has no nontrivial, proper $\S$-subgroups (this definition, while equivalent, is not the definition originally used by Wielandt or Scott). The primitive Schur rings have been studied extensively, particularly because of their applications to permutation groups. In \cite{Scott}, Scott uses 13.8.2 and 13.8.3 to rule out the existence of primitive Schur rings over groups of small order in a way analogous to how Sylow theorems are used to show the non-existence of simple groups of small order.} as they are not actually used in the proof of 13.8.2 and the portion of 13.8.3 which requires them is not included in Lemma \ref{lem:primproduct} above. 

 For the second counting argument, let $A,B$ be finite subsets of a group $G$ such that $|A|=|B|$. We say that an element $x\in AB^*$ is a \emph{tycoon}\footnote{A \emph{difference set} $D$ is a subset of a group $G$ such that every non-identity element in $G$ appears in $DD^*$ with the same multiplicity. When $G$ is finite, this is equivalent to $\overline{D}\,\overline{D}^* = n+\lambda\overline{G}$. Thus, difference sets guarantee the greatest equity among the multiplicities of group elements in the product $DD^*$. As the multiplicity of any element in the product $AB^*$ ranges between $0$ and $|A|$, a set containing a tycoon represents the greatest possible inequity between multiplicities of group elements. If multiplicities are replaced by wealth for the sake of analogy, the curious name tycoon is then explained.} in $(A,B)$ if the multiplicity of $x$ in $AB^*$ is $|A|$. Note that if $x$ is a tycoon in $(A,B)$, then $x^{-1}$ is a tycoon in $(B,A)$.  

 The simplest example of a tycoon is the group identity in the pair $(A,A)$. In fact, every tycoon $x$ in $(A,B)$ is essentially just this example up to translation. More specifically, $x$ is a tycoon in $(A,B)$ if and only if $A=xB$ if and only if $AB^*=xBB^*=AA^*x$. This can be seen by counting solutions to the equation $ab^{-1}=x$ for $a\in A$, $b\in B$ and a fixed tycoon $x\in AB^*$.

 While the existence of tycoons is fairly trivial, the existence of two tycoons in an $\S$-class will be useful. After all, if a pair $(A,B)$ has two tycoons, say $x$, $y$, then $A=xB=yB$. Thus, $B$ is stabilized by $x^{-1}y$, which implies that $G_B\neq 1$. When $A$ and $B$ are $\S$-subsets, multiple tycoons will provide nontrivial $\S$-subgroups. We summarize this in the following lemma.

 \begin{Lem} \label{cor:left-stab} Let $\S$ be a Schur ring with finite $\S$-subsets $A$, $B$ such that $|A|=|B|$. If the pair $(A,B)$ has two distinct tycoons, then $A$ and $B$ are both unions of cosets of some (necessarily finite) nontrivial $\S$-subgroup.
 \end{Lem}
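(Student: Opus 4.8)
\noindent\textit{Proof proposal.} Following the discussion preceding the lemma, the subgroup we seek will be the stabilizer of $B$. By the recorded characterization of tycoons, a tycoon $x$ in $(A,B)$ satisfies $A=xB$; hence two distinct tycoons $x\neq y$ force $xB=yB$, so that $t:=x^{-1}y$ is a non-identity element with $tB=B$. Set $H=\{g\in G : gB=B\}$. The plan is to verify, in this order: (1) $H$ is a subgroup with $t\in H\setminus\{1\}$, hence nontrivial; (2) $H$ is finite, since for any fixed $b_0\in B$ we have $Hb_0\subseteq B$, whence $|H|=|Hb_0|\le |B|<\infty$; (3) $H$ is an $\S$-subgroup; (4) both $B$ and $A$ are unions of cosets of $H$.

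The content lies in step (3): making the passage from ``stabilizer'' to ``$\S$-subgroup'' precise. Since $A$ and $B$ are finite $\S$-subsets, each is a union of finitely many (necessarily finite) $\S$-classes, so $\overline{A},\overline{B}\in\S$, and therefore $\overline{B}^*=\overline{B^*}\in\S$ as well, as $\S$ is closed under $*$. Now $H$ is exactly the stabilizer subgroup $G_{\overline{B^*}}=\{g : \overline{B^*}\,g=\overline{B^*}\}$: indeed $\overline{B^*}\,g=\overline{(g^{-1}B)^*}$, which equals $\overline{B^*}$ precisely when $g^{-1}B=B$, i.e.\ when $g\in H$. By the property of Schur rings recalled in Section \ref{sec:Prelim} that the stabilizer of any element of $\S$ is an $\S$-subgroup, $H$ is an $\S$-subgroup. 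For step (4), $gB=B$ for every $g\in H$ gives $HB=B$, so $B=\bigsqcup_b Hb$ is a disjoint union of cosets of $H$; and since $A=xB$ for a tycoon $x$, the set $A$ is a translate of $B$ and is again a union of cosets of $H$.

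The delicate point is the last clause of step (4): that a \emph{single} subgroup serves for both $A$ and $B$. In a nonabelian group the stabilizer of $A=xB$ is the conjugate $xHx^{-1}$, and the translate $xB=\bigsqcup_b (xH)b$ need not decompose into cosets of $H$ itself; this is also the gap between the left stabilizer that the two-tycoon argument produces and the (right) stabilizer $G_B$ appearing in the surrounding discussion. For the groups relevant to this paper --- in particular the abelian group $\Z\times\Z_p$ --- this difficulty vanishes: left and right stabilizers coincide, conjugation is trivial, and the stabilizer $H$ of $B$ also stabilizes $A=xB$, so $A$ and $B$ are unions of cosets of the same $H$. Thus the substance of the argument is concentrated in steps (1)--(3): converting the existence of two tycoons into a nontrivial stabilizer, bounding its order by $|B|$, and recognizing it as an $\S$-subgroup via $\overline{B}\in\S$.
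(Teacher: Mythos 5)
Your proof is correct and follows essentially the same route as the paper, which states the lemma as a summary of the immediately preceding discussion: $A=xB=yB$ yields a nontrivial element of the stabilizer of $B$, which is an $\S$-subgroup because $\overline{B}\in\S$, and both sets then decompose into cosets. You are in fact slightly more careful than the paper on two points it glosses over --- the paper's $G_B$ is a right stabilizer while two tycoons produce a left stabilizer (which you resolve via $\overline{B^*}$), and the transfer of the single subgroup from $B$ to $A=xB$ uses commutativity, which is harmless in the abelian groups $\Z\times\Z_n$ where the lemma is applied.
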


 We briefly remind the reader of the four types of traditional Schur rings. For a finite group $G$, the partition $\D(\S) = \{1, G\smallsetminus 1\}$ always affords a Schur ring, called the \emph{trivial Schur ring}. At the other extreme, for any group $G$ (finite or infinite) the partition of singletons affords a Schur ring, known as the \emph{discrete Schur ring}. This Schur ring coincides with the group algebra itself.

 If $\H \le \Aut(G)$ is a finite automorphism subgroup, then the set of elements of $\FG$ fixed by $\H$  is a Schur ring over $G$, denoted $\FG^{\H}$ and called the \emph{automorphic Schur ring} (or \emph{orbit Schur ring}) associated to $\H$. The discrete Schur ring is automorphic where $\H=1$. When $G$ is abelian, the automorphic Schur ring associated to $\H = \langle *\rangle$, consisting of inverse pairs, is called the \emph{symmetric Schur ring} and is denoted $\FG^{\pm}$. 

 Suppose $G=H\times K$. If $\S$ and $\T$ are Schur rings over $H$ and $K$, respectively, then the ring $\S\otimes_F\T$ has a Schur ring structure, called the \emph{direct product} of $\S$ and $\T$ and is denoted $\S\times\T$. The associated partition is $\D(\S\times\T) = \{CD\mid C\in \D(\S), D\in \D(\T)\}$. Also, $\FH^\H\times \FK^{\mathcal{K}} = F[H\times K]^{\H\times \mathcal{K}}$, that is, the direct product of automorphic Schur rings is automorphic.

 The final family of traditional Schur rings is due to Leung and Man \cite{LeungI}. Let $H,K\le G$ be two nontrivial, proper subgroups such that $K$ is finite, $K\le H$, and $K\trianglelefteq G$. Let $\S$ be a Schur ring over $H$ with $K$ as an $\S$-subgroup. Suppose $\varphi : G\to G/K$ is the natural quotient map. Then $\varphi(\S)$ is a Schur ring over $G/K$. Let $\T$ be a Schur ring over $G/K$ such that $\T_{H/K} = \varphi(\S)$. Then define the \emph{wedge product} $\S\wedge \T$ by the partition \[\D(\S\wedge \T) = \D(\S) \cup \{\varphi^{-1}(D)\mid D\in \D(\T)\smallsetminus\D(\T_{H/K})\}.\] Under these conditions, $\S\wedge \T$ is a Schur ring over $G$, called the \emph{wedge product} of $\S$ and $\T$. Alternatively, a Schur ring $\S$ over $G$ is a wedge product if there exist nontrivial, proper $\S$-subgroups $H,K\le G$ such $K\le H$, $K\trianglelefteq G$, and every $\S$-class outside $H$ is a union of $K$-cosets (this necessarily implies that $K$ must be finite). In this case, we say that $1<K\le H<G$ is a \emph{wedge-decomposition} of $\S$.

 \section{The Structure of Primitive Sets of Schur Rings over $\Z\times \Z_n$}\label{sec:cyclic}
 We now consider the virtually cyclic group $G=\Z\times\Z_n = \langle z\rangle \times \langle a\rangle$. Let $\S$ be a Schur ring over $G$. As mentioned above, $T(G) = \Z_n$ is an $\S$-subgroup. Hence, $T(\S)$ is some Schur subring over $\Z_n$, to which Leung's and Man's classification theorem applies. Let $\varphi : G\to \Z$ be the natural projection map. Again, $\ker \varphi = \Z_n$ is an $\S$-subgroup. Hence, $\varphi(\S)$ is a Schur ring over $\Z$. As there are only two such Schur rings, the discrete and symmetric rings, $\varphi(\S)$ is equal to one of these. Many of the following proofs will be divided into two cases based upon the image $\varphi(\S)$. For example, if $H\le \Z$ is an $\S$-subgroup, then the Schur subring $\S_H$ maps isomorphically onto $\varphi(\S_H)\le \FZ$. If $H\neq 1$, then $H\cong \Z$, and, hence, $\varphi(\S_H)$ itself is either discrete or symmetric. The structure of $\varphi(\S_H)$ must agree with $\varphi(\S)$, that is, they are either both discrete or both symmetric. As $\S_H\cong \varphi(\S_H)$, $\S_H$ is discrete when $\varphi(\S)$ is discrete and symmetric when $\varphi(\S)$ is symmetric for all nontrivial $\S$-subgroups $H\le \Z$. 

 For another example of these two cases, let $t$ be any integer. If $\varphi(\S)$ is discrete, then $\{z^t\}$ is an $\varphi(\S)$-class. Then $\varphi^{-1}(\{z^t\}) = z^t\Z_n$ is an $\S$-subset (but not necessarily primitive). Likewise, if $\varphi(\S)$ is symmetric, then $\{z^t, z^{-t}\}$ is a $\varphi(\S)$-class. Then $\varphi^{-1}(\{z^t, z^{-t}\})= z^t\Z_n\cup z^{-t}\Z_n$ is an $\S$-subset. In either case, $\{z^t, z^{-t}\}\Z_n$ is an $\S$-subset for all integers $t$, which generates its supporting $\S$-subgroup $\langle z^t\rangle \times \Z_n = \Z^{(t)}\times \Z_n\cong \Z\times \Z_n$.  Our main task is to determine which refinements of these $\S$-subsets are possible based upon assumptions on the $\S$-subgroups. The discrete case is always inherently easier to consider. For example, if we have the discrete case and $\S$ contains a singleton other than the identity $\{1\}$, say $\{a^kz^t\}$, then by Lemma \ref{lem:primproduct}, $(a^{k}z^{t})^sC$ is a primitive set of $\S$ for all $C\in \D(T(\S))$ and all integers $s$. Hence, we have $\S_{\Z^{(t)}\times \Z_n} = F[\langle a^kz^t\rangle]\times T(\S) \cong \FZ\times T(\S)$ under this assumption. Therefore, if $z^t\Z_n$ contains a primitive subset which is a singleton, then $\S_{\Z^{(t)}\times \Z_n}$ is traditional. When $T(\S)$ is automorphic (which is always the case when $n$ is prime), we may also conclude that $\S_{\Z^{(t)}\times \Z_n}$ is automorphic. We will use this fact about singletons frequently throughout the sequel.

 \begin{Thm} \label{lem:primfresh}
 Let $G=\Z\times \Z_n$ for some positive integer $n$. Let $\S$ be a Schur ring over $G$. Suppose $\Z^{(n)}$ is an $\S$-subgroup, and let $C$ be an $S$-subset. Then $C\in \D(\S)$ if and only if $C^{(k)}\in \D(\S)$ for all $k$ such that $\gcd(k,n)=1$.
 \end{Thm}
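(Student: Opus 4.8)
The plan is to observe that the backward implication is free — take $k=1$, so $C^{(1)}=C$ — and to devote all effort to showing that $C\in\D(\S)$ forces $C^{(k)}\in\D(\S)$ whenever $\gcd(k,n)=1$. Fix such a $C$ and $k$. We may assume $n\ge 2$ (for $n=1$ the group is $\Z$, and everything is known from \cite{InfiniteI}), so $k\ne 0$, and we may assume $k\ne\pm 1$, since $C^{(1)}=C$ and $C^{(-1)}=C^{*}$ already lie in $\D(\S)$. As $\Z_n=T(G)$ has exponent $n$ and $\gcd(k,n)=1$, the Frobenius-closure property from Section~\ref{sec:Prelim} gives $\overline{C^{(k)}}=\overline{C}^{(k)}\in\S$, so $C^{(k)}$ is a finite $\S$-subset; write $C^{(k)}=D_1\sqcup\dots\sqcup D_m$ with the $D_i\in\D(\S)$. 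The goal is $m=1$.

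The key device is to apply a second Frobenius map chosen to act trivially on the torsion. Take $k'>0$ with $kk'\equiv 1\pmod n$ and write $kk'=1+n\ell$; since $k\ne 1$ we have $kk'\ne 1$, hence $\ell\ne 0$, which is the only non-degeneracy needed. Then $(C^{(k)})^{(k')}=C^{(kk')}=D_1^{(k')}\sqcup\dots\sqcup D_m^{(k')}$ is a \emph{disjoint} union (the map $g\mapsto g^{k'}$ is injective on $G$ because $\gcd(k',n)=1$) of nonempty $\S$-subsets. So it suffices to prove $C^{(kk')}\in\D(\S)$: a single $\S$-class cannot be a disjoint union of two nonempty $\S$-subsets, so this forces $m=1$ and $C^{(k)}=D_1$. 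The advantage of the exponent $kk'=1+n\ell$ is that $(z^{s}a^{t})^{kk'}=z^{s(1+n\ell)}a^{t}$; thus $C^{(kk')}$ is $C$ with the $\langle z\rangle$-exponent of each element multiplied by $1+n\ell$ and the $\langle a\rangle$-part left alone.

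Now split on the image $\varphi(\S)\subseteq\FZ$, as is standard for $\Z\times\Z_n$. If $\varphi(\S)$ is discrete, then $\varphi(C)=\{z^{s_0}\}$, so $C=z^{s_0}T$ with $T\subseteq\Z_n$, and the remark above gives $C^{(kk')}=z^{n\ell s_0}C$. Since $\Z^{(n)}$ is an $\S$-subgroup and $\varphi(\S)$ is discrete, $\S_{\Z^{(n)}}$ is discrete, so $\{z^{n\ell s_0}\}\in\D(\S)$; as $\gcd(1,|C|)=1$, Lemma~\ref{lem:primproduct} gives $z^{n\ell s_0}C\in\D(\S)$, and we are done. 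If $\varphi(\S)$ is symmetric and $C\subseteq\Z_n$, then $C^{(kk')}=C\in\D(\S)$ because $kk'\equiv 1\pmod n$ and $C$ is torsion. The remaining case, $\varphi(\S)$ symmetric with $\varphi(C)=\{z^{s_0},z^{-s_0}\}$ and $s_0>0$, is the one I expect to take work: the constant-intersection-number property for the $\S$-subgroup $\Z_n$ gives $C=z^{s_0}T_1\cup z^{-s_0}T_2$ with $|T_1|=|T_2|$ and $C^{(kk')}=z^{s_0(1+n\ell)}T_1\cup z^{-s_0(1+n\ell)}T_2$, but now $C^{(kk')}$ is not a single-element translate of $C$, so Lemma~\ref{lem:primproduct} no longer applies.

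For that case the plan is to multiply by the two-element class $P=\{z^{n\ell s_0},z^{-n\ell s_0}\}$, which is in $\D(\S)$ because $\S_{\Z^{(n)}}$ is symmetric and $n\ell s_0$ is a nonzero multiple of $n$. Tracking $\langle z\rangle$-exponents in $\overline{P}\,\overline{C^{(kk')}}$ gives $\overline{P}\,\overline{C^{(kk')}}=\overline{C}+\overline{C'}$ with $C'=z^{s_0(1+2n\ell)}T_1\cup z^{-s_0(1+2n\ell)}T_2$, and since $\ell\ne 0$ the four exponents $\pm s_0$, $\pm s_0(1+2n\ell)$ are distinct, so $C\cap C'=\emptyset$ and $\overline{C}$ appears in $\overline{P}\,\overline{C^{(kk')}}$ with coefficient exactly $1$. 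Suppose $C^{(kk')}=E_1\sqcup\dots\sqcup E_r$ with $r\ge 2$, $E_i\in\D(\S)$; each $E_i$ has $\varphi(E_i)=\{z^{\pm s_0(1+n\ell)}\}$ (using $1+n\ell\ne 0$, valid as $n\ge 2$) and so meets $z^{s_0(1+n\ell)}\Z_n$. Expanding $\overline{P}\,\overline{C^{(kk')}}=\sum_i\overline{P}\,\overline{E_i}$ into $\S$-classes with nonnegative integer coefficients and reading off the coefficient of $\overline{C}$, exactly one $E_i$, say $E_1$, contributes, which forces $C\subseteq z^{n\ell s_0}E_1\cup z^{-n\ell s_0}E_1$. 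Intersecting this inclusion with the elements of $\langle z\rangle$-exponent $s_0$ shows that $E_1\cap z^{s_0(1+n\ell)}\Z_n$ is already all of $C^{(kk')}\cap z^{s_0(1+n\ell)}\Z_n$, so the $E_i$ with $i\ge 2$ miss that coset — contradicting the previous sentence. Hence $r=1$, i.e.\ $C^{(kk')}\in\D(\S)$, and the reduction in paragraph two finishes the proof.
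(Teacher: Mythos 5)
Your argument is correct and rests on the same device as the paper's: choosing $k'$ (the paper's $r$) with $kk'\equiv 1\pmod n$ and translating by a power of $z^{n}$ --- available precisely because $\Z^{(n)}$ is an $\S$-subgroup --- to pull information about $C^{(k)}$ back to $C$. The only real divergence is organizational: the paper pulls a hypothetical proper primitive subset $D\subsetneq C^{(k)}$ back to a proper $\S$-subset of $C$ (intersecting with $\{z^{t},z^{-t}\}\Z_n$ in the symmetric case), whereas you first establish that $C^{(kk')}=C^{(1+n\ell)}$ is primitive (via the coefficient-of-$\overline{C}$ count in $\overline{P}\,\overline{C^{(kk')}}$ in the symmetric case) and then conclude from the disjointness of the images $D_i^{(k')}$.
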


 \begin{proof}
 First, suppose that $C^{(k)}\in \D(\S)$. As $\gcd(k,n)=1$, we have that $|C|=|C^{(k)}|$. If $C$ is imprimitive, then there exists some proper primitive subset $D\subsetneq C$. Then $D^{(k)}\subsetneq C^{(k)}$, which is a proper $\S$-subset of a primitive set, which is a contradiction. Thus, $C\in\D(\S)$.
 
 Next, suppose that $C\in \D(\S)$. Assume first the discrete case, that is,  $\varphi(\S)=\FZ$. Then there is an integer $t$ such that \[C=\{a^{n_1}z^t,a^{n_2}z^t,\ldots, a^{n_l}z^t\} \subseteq z^t\Z_n.\] Consider \[C^{(k)}=\{a^{kn_1}z^{kt},a^{kn_2}z^{kt},\ldots, a^{kn_l}z^{kt}\} \subseteq z^{tk}\Z_n,\] which is an $\S$-subset. Suppose that $C^{(k)}$ is not primitive. Then a subset of it must be, say $D$. Up to relabeling, we may assume $D=\{a^{kn_1}z^{kt},a^{kn_2}z^{kt},\ldots a^{kn_h}z^{kt}\}$. As $\Z^{(n)}$ is an $\S$-subgroup, we know $\{z^n\}$ is primitive. Also, there exist integers $r, s$ such that $kr+ns=1$. Now consider the $\S$-subset \begin{multline*}z^{nst}D^{(r)} =\{a^{kn_1r}z^{ktr+nst},a^{kn_2r}z^{ktr+nst},\ldots, a^{kn_hr}z^{ktr+nst}\}\\ = \{a^{n_1}z^t,a^{n_2}z^t,\ldots a^{n_h}z^t\}.\end{multline*} This is a strict $\S$-subset of $C$, contradicting it being primitive. Hence, $C^{(k)}$ must be primitive. 

 The case where $\varphi(\S)=\FZ^\pm$ is handled similarly, where the $\S$-subset $z^{nst}D^{(r)}$ is replaced with $(\{z^{nst}, z^{-nst}\}D^{(r)})\cap (\{z^t, z^{-t}\}\Z_n)$.  
 \end{proof}

 \begin{Thm} 
 Suppose $G=\Z\times \Z_n$ for some positive integer $n$. Suppose $\S$ is a Schur ring over $G$. Let $H$ be the maximal such $\S$-subgroup such that $H\le \Z$. Assume $n\mid [\Z:H]$. Then let $K$ be the unique subgroup\footnote{In the case that $[\Z:H]=\infty$, that is, $H=1$, set $K:=1$ and the above result and proof would remain valid.} of $\Z$ such that $[K:H]=n$. Then every primitive set of $\S$ contained in $G\smallsetminus (K\times \Z_n)$ is a union of cosets of some nontrivial $\S$-subgroup of $\Z_n$.
 \end{Thm}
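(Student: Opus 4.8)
The assertion is equivalent to $G_C\neq1$ for every primitive set $C\not\subseteq K\times\Z_n$. Indeed $G_C$ is a finite subgroup of $G$ (each $g\in G_C$ generates a group of order at most $|C|$), hence $G_C\leq T(G)=\Z_n$; it is an $\S$-subgroup by Section~\ref{sec:Prelim}; and $C$ is automatically a union of cosets of $G_C$ (if $x\in C$ then $xG_C\subseteq C$). So it suffices to prove $G_C$ is nontrivial. Put $d:=[\Z:K]$, so that $H=\Z^{(dn)}$ and $K=\Z^{(d)}$; we may assume $d\geq2$, else $K\times\Z_n=G$ and there is nothing to prove (when $H=1$, set $d:=\infty$ and read ``$d\mid t$'' as ``$t=0$'').

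Next split on whether $\varphi(\S)$ is discrete or symmetric, where $\varphi\colon G\to\Z$ is the projection with $\ker\varphi=\Z_n$ an $\S$-subgroup. In the discrete case $C\subseteq z^t\Z_n$ with $z^t\notin K$, i.e.\ $d\nmid t$; in the symmetric case $C$ meets both $z^t\Z_n$ and $z^{-t}\Z_n$ with $t\neq0$ and $d\nmid t$. If $|C|=1$, which can occur only in the discrete case, then $C=\{z^ta^k\}$ is a nonidentity primitive singleton, and the argument recorded just before Theorem~\ref{lem:primfresh} --- apply Lemma~\ref{lem:primproduct} to the powers of $z^ta^k$ --- exhibits the $\S$-subgroup $\Z^{(tn/\gcd(k,n))}$ of $\Z$, which must lie in $H$; this forces $d\mid t$, a contradiction. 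So henceforth $|C|\geq2$, and the plan is to induct on $n$ (base case $n=1$: then $K=\Z$, and the statement is vacuous). In the discrete case set $N:=\langle\supp(\overline C\,\overline C^*)\rangle$; since $C\subseteq z^t\Z_n$ we have $\overline C\,\overline C^*\in\S\cap F[\Z_n]=T(\S)$, so $N$ is an $\S$-subgroup of $\Z_n$, nontrivial because $|C|\geq2$, and $C$ lies in a single coset of $N$ (as $c^{-1}C\subseteq CC^*\subseteq N$ for any $c\in C$). If $N\subsetneq\Z_n$, work inside the $\S$-subgroup $\langle\supp\overline C\rangle$: its torsion subgroup is $N$, so it is isomorphic to $\Z\times\Z_{|N|}$ with $|N|<n$, the coset of $N$ containing $C$ is an $\S$-subset (being $\rho^{-1}$ of a singleton class, for the Cayley projection $\rho$ modulo $N$), and $C$ is a primitive set of this smaller Schur ring still lying outside the analogue of $K$; the index-divisibility needed to invoke the inductive hypothesis follows from $d\nmid t$, and it gives $G_C\neq1$.

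There remains the case $N=\Z_n$ --- automatic when $n$ is prime --- in which one must show outright that $C=z^t\Z_n$, so that $G_C=\Z_n$. Here $z^t\Z_n$ is an $\S$-subset all of whose $\S$-classes have size $\geq2$ (by the singleton paragraph above). When $T(\S)$ is discrete this is immediate: every $\{a^i\}$ is then an $\S$-class, so by Lemma~\ref{lem:primproduct} the $n$ translates $a^{i}C$ $(0\leq i<n)$ are pairwise distinct primitive sets, each of size $|C|\geq2$, all contained in the $n$-element set $z^t\Z_n$ --- impossible. When $T(\S)$ is not discrete, the plan is to extract a nontrivial $\S$-subgroup stabilizing $C$ from the mutual products of the $\S$-classes inside $z^t\Z_n$, using the length relations of Lemma~\ref{lem:sizes}, the coprime-product rule of Lemma~\ref{lem:primproduct}, the traditional structure of $T(\S)$, and Lemma~\ref{cor:left-stab} to turn coincidences of multiplicities into such a subgroup. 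The symmetric case runs in parallel, working with the $\Z_n$-part $(\overline C\,\overline C^*)\circ\overline{\Z_n}\in T(\S)$ in place of $\overline C\,\overline C^*$ (whose support now also meets $z^{2t}\Z_n$ and $z^{-2t}\Z_n$) and treating the uninformative subcase $|C|=2$ directly, via the singleton paragraph applied in $z^{2t}\Z_n$. This non-discrete-torsion part of the $N=\Z_n$ case, together with its symmetric counterpart, is the main obstacle; the remainder is bookkeeping with the tools of Section~\ref{sec:Prelim}.
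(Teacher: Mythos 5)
Your reduction to showing $G_C\neq 1$, your handling of singletons, and the translate argument when $T(\S)$ is discrete are all sound, but the proof is not complete: you explicitly defer the case $\langle\supp(\overline C\,\overline C^{*})\rangle=\Z_n$ with $T(\S)$ non-discrete (and its symmetric analogue) to a ``plan,'' and that case is the heart of the theorem. Worse, the plan as described cannot succeed, because none of the tools you list for it (Lemmas \ref{lem:sizes}, \ref{lem:primproduct}, \ref{cor:left-stab}, the structure of $T(\S)$) makes any use of the hypothesis that $C$ lies outside $K\times\Z_n$, i.e.\ of the maximality of $H$ --- and without that hypothesis the conclusion is false: automorphic Schur rings such as $\FG^{\langle\sigma_r\rangle}$ have primitive sets $z^tD$ with $D$ a $\sigma_r$-orbit rather than a union of cosets, and these are precisely classes sitting inside $K\times\Z_n$. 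You also over-state the goal there: $N=\Z_n$ does not force $C=z^t\Z_n$; you only need $G_C\neq 1$, and $G_C$ may be a proper nontrivial subgroup even when $CC^{*}$ generates $\Z_n$. Separately, the induction step for $N\subsetneq\Z_n$ leaves the verification of the index-divisibility hypothesis in the smaller group to a one-clause assertion.

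The missing idea is to use the product $\overline C\,\overline C^{(n-1)}$ rather than $\overline C\,\overline C^{*}$. Writing $C=z^tA$, this equals $z^{nt}\overline A\,\overline A^{*}$, supported in the single coset $z^{nt}\Z_n$, and the set of elements attaining the maximal multiplicity $|A|$ (the tycoons of $(A,A)$) is an $\S$-subset. If the identity were the only tycoon, then $\{z^{nt}\}$ would be an $\S$-subset and $\langle H,z^{nt}\rangle$ an $\S$-subgroup of $\Z$ properly containing $H$ --- this is exactly where $z^t\notin K$ enters, since it gives $z^{nt}\notin H$ --- contradicting maximality. Hence $(A,A)$ has a second tycoon and Lemma \ref{cor:left-stab} produces the nontrivial stabilizing $\S$-subgroup at once, with no induction on $n$ and no case split on $T(\S)$ or on $\langle CC^{*}\rangle$. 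The symmetric case is handled the same way after intersecting with $\{z^{nt},z^{-nt}\}\Z_n$ via the Hadamard product, together with a short symmetry count showing both $(A,A)$ and $(B,B)$ acquire second tycoons and a final computation with $\overline C\,\overline{\Z_d}$ showing the stabilizer of $A$ also stabilizes $B$.
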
 

 \begin{proof}
 First, suppose $\varphi(\S)=F[\Z].$ For a fixed integer $t$, let $C\subseteq z^t\Z_n$ be an $\S$-class. Then there exists some subset $A\subseteq \Z_n$ such that $C=z^tA$. Then consider the product $\mathcal{X}:=\overline{C}\, \overline{C}^{(n-1)} =z^{nt}\overline{A}\, \overline{A}^* \in \S.$  Clearly, the identity $1\in AA^*$ is a tycoon. If this pair has no other tycoons, then $z^{nt}$ is the only element in $\mathcal{X}$ with $|A|$-multiplicity, which would imply that $\{z^{nt}\}$ is an $\S$-subset and $\langle z^{nt}\rangle$ is an $\S$-subgroup. If $t$ is chosen such that $z^t\in \Z\smallsetminus K$, then $z^{nt}\in \Z\smallsetminus H$ and we have contradicted the maximality of $H$ (since $\langle H, z^{nt}\rangle$ would be an $\S$-subgroup). Therefore, $(A,A)$ must have a second tycoon, and so Lemma \ref{cor:left-stab} finishes this case.

 The case that $\varphi(\S)=\FZ^\pm$ is handled similarly. Let $C\subseteq \{z^t,z^{-t}\}\Z_n$ be a primitive set, and let $C=z^tA\cup z^{-t}B$ for $A, B\subseteq \Z_n$. Necessarily, $|A|=|B|$ (otherwise, $\varphi(\S)$ would be discrete). Consider the product \[\mathcal{X}:=(\overline{C}\,\overline{C}^{(n-1)})\circ \overline{\{z^{nt},z^{-nt}\}\Z_n} = z^{nt}\overline{A}\, \overline{A}^*+ z^{-nt}\overline{B}\, \overline{B}^* \in \S.\]
  Clearly, $1$ is a tycoon in $(A,A)$ and $(B,B)$. Then we have that $z^{nt}$ and $z^{-nt}$ have the same multiplicity of $|A|$ in $\mathcal{X}$. If these are the only tycoons in the pairs $(A,A)$ and $(B,B)$, then $\{z^{nt},z^{-nt}\}$ is primitive in $\S$. If again $t$ is chosen such that $z^t\in \Z\smallsetminus K$, then the maximality of $H$ is contradicted. We, therefore, conclude that $(A,A)$ or $(B,B)$ must have a second tycoon. In fact, they both do. To see this, the subset of $\supp(\mathcal{X})$ consisting of all the same fixed multiplicity is clearly an $\S$-subset. In particular, the subset of $\supp(\mathcal{X})$ with elements having multiplicity $|A|$ is an $\S$-subset, call it $D$, but $D$ is exactly the set of the tycoons of $(A,A)$ and $(B,B)$. As $\varphi(D)$ is a symmetric set in $\varphi(\S)$, the number of elements in $D$ of the form $a^iz^{nt}$ must equal the number of elements in $D$ of the form $a^jz^{-nt}$. In particular, the number of tycoons in $(A,A)$ is equal to the number in $(B,B)$. By Lemma \ref{cor:left-stab}, $A$ and $B$ are unions of cosets of some $\S$-subgroup of $\Z_n$. Let $\Z_d$ be the $\S$-subgroup of $\Z_n$ which stabilizes $A$. Then 
  \[\overline{C}\, \overline{\Z_d} = (z^t\overline{A}+ z^{-t}\overline{B})\overline{\Z_d}=|d|z^t\overline{A}+ z^{-t}\overline{B}\, \overline{\Z_d} = |d|\overline{C}, \] where the last equality follows from the primitivity of $C$. Hence, $B\Z_d= B$, and $C$ is a union of cosets of $\Z_d$. 
 \end{proof}

 Let $p$ be a prime number. In the case $G=\Z\times \Z_q$, where $q=p^n$, there is a unique minimal torsion $\S$-subgroup in $\Z_q$. Thus, every $\S$-class outside $K\times \Z_q$ is a union of cosets of this unique minimal $\S$-subgroup. Hence, $\S$ is necessarily a wedge product. Furthermore, if $n$ in the above proof is replaced with a power of a prime $q=p^n$, then we gain greater control on the structure of these primitive sets in $\S$, which leads naturally to the following corollary.

 \begin{Cor}\label{lem:trivial} Let $G=\Z\times \Z_{q}$, where $q=p^n$ for some prime $p$. Let $\S$ be a Schur ring over $G$. Let $H$ be the maximal $\S$-subgroup such that $H\le \Z$. Assume $p\mid [\Z:H]$. Then let $K$ be the unique subgroup of $\Z$ such that $[K:H]=p$.  Then $\S$ is a wedge product. In particular, $\S=\S_{K\times \Z_q}\wedge \varphi(\S)$.
 \end{Cor}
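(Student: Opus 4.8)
The plan is to extract a wedge-decomposition of $\S$ from the lattice of torsion $\S$-subgroups together with the argument of the preceding theorem, suitably modified, and then to identify the two wedge factors. I would begin with the observation that the subgroups of $\Z_q=\Z_{p^n}$ form a chain under inclusion, so the $\S$-subgroups contained in $\Z_q$, being a sublattice of that chain, also form a chain; as $\Z_q=T(G)$ is itself an $\S$-subgroup, this chain is nonempty and has a unique minimal nontrivial member $N$, and every nontrivial torsion $\S$-subgroup contains $N$.

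Next I would re-run the proof of the preceding theorem with $\overline{C}\,\overline{C}^{(p-1)}$ in place of $\overline{C}\,\overline{C}^{(q-1)}$; this element lies in $\S$ since $\gcd(p-1,q)=1$. In the discrete case, writing an $\S$-class $C\subseteq z^t\Z_q$ as $C=z^tA$ with $A\subseteq\Z_q$, one has $\overline{C}\,\overline{C}^{(p-1)}=z^{pt}\,\overline{A}\,\overline{A^{(p-1)}}$, and if $z^t\notin K$ then $z^{pt}\notin H$ (using $p\mid[\Z:H]$ and $[K:H]=p$). This should give the same dichotomy as before: either $A$ has a nontrivial stabilizer in $\Z_q$, so that $C$ is a union of cosets of a nontrivial torsion $\S$-subgroup and hence, by the chain property, of $N$; or one produces an $\S$-subgroup of $\Z$ strictly containing $H$, contradicting its maximality. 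The symmetric case $\varphi(\S)=\FZ^\pm$ would be treated just as in the preceding theorem, with $z^{nt}$ replaced by $z^{pt}$ and the Hadamard product used to split off the $z^{\pm pt}\Z_q$-components before applying Lemma \ref{cor:left-stab}. The outcome is that every $\S$-class lying outside $K\times\Z_q$ is a union of cosets of $N$.

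It then remains to package this as a wedge product and to identify the factors. The subgroup $N$ is finite, nontrivial, and normal in the abelian group $G$, with $N\le\Z_q\le K\times\Z_q$; and $K\times\Z_q=\langle z^{[\Z:K]}\rangle\times\Z_q$ is an $\S$-subgroup by the general fact that $\langle z^s\rangle\times\Z_q$ is an $\S$-subgroup for every integer $s$, and it is proper in $G$ whenever $[\Z:H]>p$ (the boundary case $[\Z:H]=p$, where $K=\Z$, needs a separate and easier treatment). Thus $1<N\le K\times\Z_q<G$ is a wedge-decomposition, so $\S$ is a wedge product and $\S=\S_{K\times\Z_q}\wedge(\S/N)$. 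Since $\S/N$ is a Schur ring over $\Z\times\Z_{p^{n-m}}$ (where $N\cong\Z_{p^m}$) again satisfying the hypotheses, iterating this — formally, inducting on $n$ with base case $N=\Z_q$, for which $\S/N=\varphi(\S)$ outright — peels off all of the torsion and identifies the second factor as $\varphi(\S)$, giving $\S=\S_{K\times\Z_q}\wedge\varphi(\S)$.

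The step I expect to be the main obstacle is making the tycoon argument go through when $n>1$. For $q=p$ the product $\overline{C}\,\overline{C}^{(p-1)}$ is exactly $z^{pt}\,\overline{A}\,\overline{A}^*$, so the trivial tycoon $1\in AA^*$ is available and the proof of the preceding theorem runs verbatim; but when $q=p^n$ with $n>1$ the product $\overline{A}\,\overline{A^{(p-1)}}$ need not have any coefficient equal to $|A|$, so one cannot directly speak of two tycoons and invoke Lemma \ref{cor:left-stab}. The remedy I would pursue is to recover the torsion stabilizer of $A$ from the auxiliary product $\overline{C}\,\overline{C}^*=\overline{A}\,\overline{A}^*$, which does carry the trivial tycoon, reserving $\overline{C}\,\overline{C}^{(p-1)}$ for the job of forcing a forbidden $\S$-subgroup to sit at $z^{pt}$; making this two-pronged argument airtight, coordinating the powers $z^{\pm pt}$ in the symmetric case, and checking that the resulting top-coefficient $\S$-subset genuinely contradicts the maximality of $H$, is where the real work will lie.
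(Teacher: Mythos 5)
Your proposal follows the paper's own route. The paper derives this corollary from the immediately preceding theorem by two observations: first, the subgroups of $\Z_q=\Z_{p^n}$ form a chain, so there is a unique minimal nontrivial torsion $\S$-subgroup $N$ and the theorem's conclusion (``union of cosets of \emph{some} nontrivial $\S$-subgroup of $\Z_q$'') upgrades to ``union of $N$-cosets,'' giving a wedge decomposition; second, replacing the product $\overline{C}\,\overline{C}^{(q-1)}$ by $\overline{C}\,\overline{C}^{(p-1)}$ relocates the would-be forbidden $\S$-subgroup to $z^{pt}$ and thereby sharpens the index condition from $[K:H]=q$ to $[K:H]=p$. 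You reproduce both steps, and you additionally supply the induction needed to see that the second wedge factor is literally $\varphi(\S)$ (i.e., that classes outside $K\times\Z_q$ are full $\Z_q$-cosets rather than merely $N$-cosets); the paper leaves that identification implicit. Your induction is sound on the point you might have worried about: since any $\S/N$-subgroup of $\Z N/N$ has preimage of the form $(P\cap\Z)\times N$ with $P\cap\Z\le H$, the maximal subgroup of the infinite cyclic part does not grow when passing to $\S/N$.

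The one substantive issue is the obstacle you flag for $n>1$, and you are right that it is real: the paper dismisses it with ``can easily be modified,'' but for $A\subseteq\Z_{p^n}$ with $n>1$ one has $A^{(p-1)}\ne A^*$, so $\overline{C}\,\overline{C}^{(p-1)}=z^{pt}\,\overline{A}\,\overline{A^{(p-1)}}$ need not contain a tycoon. Your proposed two-pronged repair does not close this as stated. The force of the theorem's argument comes from having the trivial tycoon and the non-identity location $z^{qt}$ in the \emph{same} product, so that ``only one tycoon'' forces a singleton $\S$-subset outside $H$. If you instead read the stabilizer off $\overline{C}\,\overline{C}^*=\overline{A}\,\overline{A}^*$, then the ``no second tycoon'' branch only yields that $\{1\}$ is an $\S$-subset, which is vacuous and contradicts nothing, while the separate product $\overline{C}\,\overline{C}^{(p-1)}$ supplies no singleton at $z^{pt}$ to play against the maximality of $H$. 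For $q=p$ --- the only case invoked in the proof of the main theorem --- there is nothing to repair, since then $A^{(p-1)}=A^*$ and the preceding theorem's proof applies verbatim with $z^{pt}$ in place of $z^{nt}$; for general $p^n$ both you and the paper still owe an argument here. Your separate treatment of the boundary case $K=\Z$ is a reasonable precaution that the paper likewise glosses over.
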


 We next consider the case that $G=\Z\times \Z_p$, for some prime $p$. Note that the classification of Schur rings over cyclic groups simplifies when the cyclic group has prime order. As direct and wedge products are impossible over $\Z_p$ and the trivial Schur ring is automorphic, all Schur rings over $\Z_p$ are automorphic and correspond to an integer $m$ coprime to the order $p$. 

 Before continuing, we stop to discuss $\Aut(\Z\times \Z_p)$. Let $\sigma_m: G\to G$ be the automorphism associated with the rule $\sigma_m(z)=z$, and $\sigma_m(a)= a^{m}$ for each integer $m$ coprime to $p$. Also, let $\rho: G \to G$ be the automorphism defined by $\rho(z)=az$ and $\rho(a)=a$. Finally, let $r$ be a primitive root modulo $p$. Then $\Aut(\Z\times \Z_p) = \langle \rho, \sigma_r, *\rangle \cong GA(1,p)\times \Z_2$, where $GA(1,p)$ denotes the general affine group over a finite vector space of order $p$.

 \begin{Thm} \label{lem:allz-srings} 
 Suppose $G=\Z\times \Z_p$, where $p$ is a prime. If $\S$ is a Schur ring over $G$ where $\Z$ is an $\S$-subgroup, then $\S$ is an automorphic Schur ring.
 \end{Thm}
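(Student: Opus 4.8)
The plan is to split on the structure of the Schur subring $\S_\Z=\S\cap F[\Z]$, which, as recalled in Section \ref{sec:cyclic}, must agree with $\varphi(\S)$ and hence equals either the discrete ring $F[\Z]$ or the symmetric ring $F[\Z]^\pm$. Since $p$ is prime, $T(\S)$ is automorphic, say $T(\S)=F[\Z_p]^{\langle m\rangle}$ with $\langle m\rangle\le\Aut(\Z_p)\cong(\z/p)^\times$; write $\sigma_m$ for the automorphism of $G$ fixing $z$ with $a\mapsto a^m$. In the discrete case the result is immediate: $\{z\}$ is then a nonidentity singleton $\S$-class sitting inside $z\Z_p$, so by the singleton argument spelled out in Section \ref{sec:cyclic} (a consequence of Lemma \ref{lem:primproduct}) we get $\S=\S_{\Z\times\Z_p}=F[\Z]\times T(\S)=F[G]^{\langle\sigma_m\rangle}$, which is automorphic. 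Everything below concerns the symmetric case.

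Assume $\S_\Z=F[\Z]^\pm$. Because $\Z$ is an $\S$-subgroup, the $\S$-class of $z$ lies in $\Z$ and is therefore an $\S_\Z$-class, hence equals $\{z,z^{-1}\}$; likewise $\{z^t,z^{-t}\}\in\D(\S)$ for all $t\ge1$, so $\Z^{(p)}$ is an $\S$-subgroup and Theorem \ref{lem:primfresh} is available. Every $\S$-class outside $\Z$ then has the form $C=z^tA\cup z^{-t}B$ with $\varphi(C)=\{z^t,z^{-t}\}$, $t\ge1$, and $A,B\subseteq\Z_p$ nonempty with $|A|=|B|$. For a $T(\S)$-class $O$ one has $\overline{O}\,\overline{\{z,z^{-1}\}}=\overline{zO\cup z^{-1}O}$, so $zO\cup z^{-1}O$ is an $\S$-subset; these blocks partition $z\Z_p\cup z^{-1}\Z_p$, so every level-one class lies in a unique block. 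Inside a fixed block, the $\Z_p$-component of $\overline{C}\,\overline{\{z,z^{-1}\}}$ equals $\overline{A}+\overline{B}\in T(\S)$, which forces $A\cup B$ (if $A\cap B=\emptyset$) or $A$ (if $A=B$) to be $\langle m\rangle$-invariant and hence equal to $O$; together with $|A|=|B|$ this gives a dichotomy: either the block is a single $\S$-class $\{z,z^{-1}\}O$, or it consists of exactly two $\S$-classes $zA\cup z^{-1}(O\smallsetminus A)$ and $z(O\smallsetminus A)\cup z^{-1}A$ with $|A|=|O|/2$. When $|O|$ (equivalently the order of $m$) is odd, Lemma \ref{lem:primproduct} applied to $\overline{\{z,z^{-1}\}}\,\overline{O}$ forces the first alternative for every block.

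The crux is the even case: I would show that whenever a block splits, $A$ is a coset of $N$, the unique index-two subgroup of the cyclic group $\langle m\rangle$. The inputs are that $\overline{A}\,\overline{B}$ and $\overline{A}\,\overline{A}^\ast+\overline{B}\,\overline{B}^\ast$ are the $\Z_p$-components of $\overline{C}\,\overline{C}$ and $\overline{C}\,\overline{C}^\ast$, hence lie in $T(\S)$ and so are $\langle m\rangle$-invariant — an additive-combinatorial constraint on bisections of the multiplicative coset $O$ that singles out the $N$-cosets — supplemented by the Cayley projection $\psi\colon G\to G/\Z\cong\Z_p$ (whose kernel $\Z$ is an $\S$-subgroup, so $\psi(\S)$ is automorphic and the intersection numbers of $C$ with cosets of $\Z$ are constant) and the bookkeeping of Lemmas \ref{lem:sizes}, \ref{lem:primproduct}, \ref{cor:left-stab}. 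Uniqueness of $N$ inside $\langle m\rangle$ and of the pairing direction then makes the splitting globally coherent, and reading off the level-one data and bootstrapping to all levels via multiplication by $\overline{\{z,z^{-1}\}}$ and the Frobenius permutations of $\D(\S)$ from Theorem \ref{lem:primfresh}, one identifies $\D(\S)$ with the orbit partition of $\langle\sigma_m,\tau\rangle$ on $G$, where $\tau\colon z\mapsto z^{-1}$, $a\mapsto a^s$ for a suitable $s$ with $s^2\in\langle m\rangle\pmod p$ (and $\tau$ may be taken to be $z\mapsto z^{-1}$, $a\mapsto a$ when no block splits). Hence $\S=F[G]^{\langle\sigma_m,\tau\rangle}$ is automorphic.

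The main obstacle is precisely this combinatorial step in the symmetric, even-order case: ruling out a block $zO\cup z^{-1}O$ that fissions into three or more $\S$-classes or into two classes along a bisection of $O$ that is not an $N$-coset, and checking that all the split blocks split compatibly so that a single group automorphism recovers the whole partition. By contrast, the discrete case and the level-by-level propagation are routine given the machinery of Sections \ref{sec:Prelim} and \ref{sec:cyclic}.
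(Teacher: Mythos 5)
Your reduction of the problem is sound and matches the paper's: the discrete case via the singleton argument, the decomposition of $z\Z_p\cup z^{-1}\Z_p$ into blocks $zO\cup z^{-1}O$ for $O\in\D(T(\S))$, the projection argument forcing a proper class $D=zA\cup z^{-1}B$ in a block to satisfy $A\sqcup B=O$ with $|A|=|B|$, and the resulting two-class dichotomy are all correct and are essentially what the paper does. But the step you yourself flag as ``the crux'' --- that a splitting block must split along the two cosets of the index-two subgroup of $\langle m\rangle$ inside the orbit $O$ --- is not proved; you list candidate inputs ($\overline{A}\,\overline{B}$ and $\overline{A}\,\overline{A}^*+\overline{B}\,\overline{B}^*$ lying in $T(\S)$, plus Lemmas \ref{lem:sizes}, \ref{lem:primproduct}, \ref{cor:left-stab}) and assert that an ``additive-combinatorial constraint \ldots singles out the $N$-cosets,'' but no argument is given, and it is not evident that $\langle m\rangle$-invariance of those two products alone rules out all other bisections of $O$. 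This is a genuine gap, and it is exactly where the paper does real work: assuming (after relabeling) that both $a$ and $a^m$ lie in $A$, it forms the two $\S$-subsets $D^{(m)}$ and $\{z^{k(m-1)},z^{k(1-m)}\}D$, notes that $D^{(m)}$ is primitive by Theorem \ref{lem:primfresh} and meets the second set in $a^mz^{km}$, hence is contained in it, and then tracks the element $a^{m^2}z^{km}\in D^{(m)}$ back to conclude $a^{m^2}\in A\cap B$, contradicting $A\cap B=\emptyset$. Some such Frobenius-plus-translation argument (or a substitute of comparable strength) is needed to close your proof.

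Two smaller points. First, your final identification $\S=\FG^{\langle\sigma_m,\tau\rangle}$ cannot be right in the split case: if $\sigma_m\in\H$ then the $\H$-orbit of $az$ contains $a^mz$, whereas in a split block $a^mz$ lies in the complementary class; the correct group is the cyclic group generated by $z\mapsto z^{-1}$, $a\mapsto a^m$ (the paper's $\sigma_{-m}*$), whose square is $\sigma_{m^2}$, not $\sigma_m$. Second, the global coherence across blocks and levels that you defer (``checking that all the split blocks split compatibly'') also requires an argument; the paper supplies one by intersecting translates $\{z^{\ell-k},z^{k-\ell}\}D$ with the blocks at level $\ell$ and by applying Frobenius maps $C'=C^{(s)}$ to move between torsion classes.
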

 \begin{proof}
 First, suppose $\varphi(\S)=\FZ$, which implies that $\S_\Z=\FZ$, that is, $\{z^k\}$ is a primitive set for each integer $k$. If $C\in \D(T(\S))$, then $z^kC$ is likewise primitive in $\S$ by Lemma \ref{lem:primproduct}. Hence, $\S=\FG\times T(\S)$.\footnote{Note that this argument does not require $p$ to be prime and hence applies to all positive integers $n$.} As $\S_\Z$ and $\S_p$ are automorphic, $\S$ is automorphic as well.

 Next, suppose $\varphi(\S)$ is symmetric, which implies that $\S_\Z=\FZ^\pm$, that is, $\{z^k, z^{-k}\}$ is a primitive set for each integer $k$. If $C\in \D(T(\S))$, then $\{z^k, z^{-k}\}C$ is an $\S$-subset for every $k$. If all $\S$-subsets of this form are primitive for every $C$ and every $k$, then again $\S=\FG^\pm\times T(\S)$ is automorphic.
 
 Suppose then that there is some integer $k$ and some primitive set $C\in\D(T(\S))$ such that $\{z^k,z^{-k}\}C$ is imprimitive and that $D \subsetneq \{z^k,z^{-k}\}C$ is a primitive subset. Since $\varphi(D)$ is symmetric, $D= z^kA\cup z^{-k}B$ for $A,B\subseteq C$ and $|A|=|B|$. Since all non-identity elements of $\Z_p$ are generators, we may assume that $a\in A$.  If $\pi: G\to \Z_p$ is the natural projection onto $\Z_p$, then $\pi(\overline{D}) = \pi(\overline{A}+\overline{B}) = \overline{A}+\overline{B}$ and $\pi(\overline{C})=\overline{C}$. But $\supp(\overline{A}+\overline{B}) =A\cup B \subseteq C\in \D(T(\S))$. Hence, $A\cup B=C$ and $A\cap B=\emptyset$. 
 
 Let $m$ be the integer coprime to $p$ which affords the automorphic Schur ring $T(\S)$ over $\Z_p$, which necessarily has even order. 
 We claim \[D = z^k\{a^{m^{2t}} \mid t\in \Z\}\ \cup\ z^{-k}\{a^{m^{2t+1}} \mid t\in \Z\} = \{az^k, a^{m}z^{-k}, a^{m^2}z^k, a^{m^3}z^{-k}, \ldots\}.\] If the claim were false, then, up to relabeling of generators, we may assume $D = \{az^k, a^{m}z^k, a^{m^2}z^{-k}, \ldots\}$. Next, \[D^{(m)}=z^{km}A^{(m)}\cup z^{-km}B^{(m)}=\{a^mz^{km}, a^{m^2}z^{km}, a^{m^3}z^{-km}, \ldots\}\] and \begin{multline*}
     \{z^{k(m-1)}, z^{k(1-m)}\}D=z^{km}A\cup z^{k(2-m)}A\cup z^{k(m-2)}B\cup z^{-km}B=\\ \{az^{km}, a^mz^{km}, a^{m^2}z^{k(m-2)}, \ldots, az^{k(2-m)}, a^mz^{k(2-m)}, a^{m^2}z^{-km}, \ldots\},
 \end{multline*} which are both $\S$-subsets. By Theorem \ref{lem:primfresh}, $D^{(m)}$ is primitive. As $a^{m}z^{km}\in D^{(m)}\cap \{z^{k(m-1)}, z^{k(1-m)}\}D$, we have that $D^{(m)}\subseteq \{z^{k(m-1)}, z^{k(1-m)}\}D$. Then as $a^{m^2}z^{km}\in D^{(m)}$ we have that $a^{m^2}z^{km}\in \{z^{k(m-1)},z^{k(1-m)}\}D$. But this implies that $a^{m^2}z^k\in D$ and $a^{m^2}\in A\cap B\neq \emptyset$, which is a contradiction. This proves the claim.  Let $D'=z^kB\cup z^{-k}A$. Hence, $D\cup D'=\{z^k,z^{-k}\}C$ and $D, D'\in \D(\S)$. 
 
 If $\ell$ is some other integer, note that $\{z^{\ell-k}, z^{k-\ell}\}D\cap \{z^\ell, z^{-\ell}\}C$ is a non-empty $\S$-subset, as they both contain $az^\ell$, but it is a proper subset of $\{z^\ell, z^{-\ell}\}C$ since it does not contain $az^{-\ell}$. Applying the above argument, $\{z^\ell, z^{-\ell}\}C =  E\cup E'$, where $E, E'\in \D(\S)$ and $|E|=|E'|$. 

 Finally, if $C'\in \D(T(\S))$ is some other non-identity class, then $C'=C^{(s)}$ for some integer $s$ coprime to $p$. Let $\{z,z^{-1}\}C=E\cup E'$ be a primitive decomposition. Then $E^{(s)}=E'^{(s)} = \{z^s, z^{-s}\}C'$ and $E^{(s)}, E'^{(s)}\in \D(\S)$. Hence, $\{z^s, z^{-s}\}C'$ is imprimitive, which implies that $\{z^\ell, z^{-\ell}\}C'$ is imprimitive with an analogous primitive decomposition as illustrated above. In summary, each primitive set of $\S$ has the form \[\{a^iz^k, a^{im}z^{-k}, a^{im^2}z^k, a^{im^3}z^{-k}, \ldots\}\] for some $i$. But these are exactly the orbits of the automorphism $\sigma_{-m}*$. Therefore, $\S=\FG^{\langle \sigma_{-m}*\rangle}$.
 \end{proof}

 \begin{Cor} \label{cor:allz-srings} 
 Suppose $G=\Z\times \Z_p$, where $p$ is a prime. If $\S$ is a Schur ring over $G$ where $\langle a^iz\rangle$ is an $\S$-subgroup for any integer $i$, then $\S$ is an automorphic Schur ring.
 \end{Cor}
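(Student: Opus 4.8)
The plan is to reduce the statement to Theorem~\ref{lem:allz-srings} by transporting $\S$ across a suitable automorphism of $G$. Recall the automorphism $\rho\colon G\to G$ determined by $\rho(z)=az$ and $\rho(a)=a$. A one-line induction shows that $\rho^{j}(z)=a^{j}z$ and $\rho^{j}(a)=a$ for every integer $j$; in particular $\rho^{i}$ carries the subgroup $\Z=\langle z\rangle$ onto $\langle a^{i}z\rangle$. Accordingly I would set $\theta:=\rho^{-i}\in\Aut(G)$, so that $\theta\bigl(\langle a^{i}z\rangle\bigr)=\langle z\rangle=\Z$.

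Next I would move the Schur ring structure across $\theta$. Viewing $\theta$ as a Cayley homomorphism, it is bijective, so $\ker\theta=1$ is (vacuously) an $\S$-subgroup; hence $\theta(\S)$ is a Schur ring over $\theta(G)=G$ with $\D(\theta(\S))=\{\theta(C)\mid C\in\D(\S)\}$. Since $\langle a^{i}z\rangle$ is an $\S$-subgroup, it is an $\S$-subset that happens to be a subgroup, and therefore its image $\theta\bigl(\langle a^{i}z\rangle\bigr)=\Z$ is a $\theta(\S)$-subset that is a subgroup, i.e.\ $\Z$ is a $\theta(\S)$-subgroup.

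Now Theorem~\ref{lem:allz-srings} applies directly to $\theta(\S)$ and produces a finite subgroup $\H\le\Aut(G)$ with $\theta(\S)=\FG^{\H}$. Finally I would pull this back: $\S=\theta^{-1}(\theta(\S))=\theta^{-1}(\FG^{\H})$, and if $\beta\in\FG$ satisfies $h(\beta)=\beta$ for all $h\in\H$, then $(\theta^{-1}h\theta)\bigl(\theta^{-1}(\beta)\bigr)=\theta^{-1}(h(\beta))=\theta^{-1}(\beta)$, and conversely; hence $\theta^{-1}(\FG^{\H})=\FG^{\theta^{-1}\H\theta}$. Thus $\S$ is the automorphic Schur ring associated to the conjugate subgroup $\theta^{-1}\H\theta\le\Aut(G)$, which is exactly what is claimed.

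There is essentially no hard step here: the whole argument is transport of structure. The only points that need care are the bookkeeping facts that (a) a group automorphism, regarded as a Cayley homomorphism with trivial (hence $\S$-)kernel, sends Schur rings to Schur rings and $\S$-subgroups to $\S$-subgroups, and (b) under this operation automorphic Schur rings go to automorphic Schur rings, with the automorphism group replaced by its conjugate; both are immediate from the properties of Cayley homomorphisms recorded in Section~\ref{sec:Prelim}. Should one wish to avoid automorphism language altogether, the same conclusion follows by rerunning the proof of Theorem~\ref{lem:allz-srings} verbatim with $z$ replaced throughout by $a^{i}z$.
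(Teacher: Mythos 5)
Your proposal is correct and follows essentially the same route as the paper: transport $\S$ by $\rho^{-i}$ so that $\Z$ becomes an $\S$-subgroup, apply Theorem~\ref{lem:allz-srings}, and pull back using the fact that $\tau(\FG^{\H})=\FG^{\tau\H\tau^{-1}}$ for $\tau\in\Aut(G)$. The only difference is that you spell out the conjugation computation that the paper relegates to a footnote.
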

 \begin{proof}
 The image $\rho^{-i}(\S)$ is a Schur ring over $G$ such that $\rho^{-i}(\langle a^iz\rangle) = \langle z\rangle = \Z$ is an $\rho^{-i}(\S)$-subgroup. By Theorem \ref{lem:allz-srings}, $\rho^{-i}(\S)$ is automorphic. As the automorphic image of an automorphic Schur ring is likewise automorphic,\footnote{If $\S=\FG^\H$ and $\tau\in \Aut(G)$, then $\tau(\S) = \FG^{\tau\H\tau^{-1}}$.} $\S=\rho^i(\rho^{-i}(\S))$ is likewise automorphic. 
 \end{proof}

 \begin{Cor} \label{lem:mincase}
 Suppose $G=\Z\times \Z_p$ where p is a prime.\footnote{In the proof of Theorem \ref{lem:mincase}, we will assume that $p$ is an odd prime. The case when $p=2$ is fairly simple and is already addressed in \cite{InfiniteI}.} Let $\S$ be a Schur ring over $G$ such that $\varphi(\S)=F[\Z]^\pm$. If $C\in \D(\S)$ such that $|C|=2$ and $C\subseteq z\Z_p\cup z^{-1}\Z_p$, then $\S$ is an automorphic Schur ring.
 \end{Cor}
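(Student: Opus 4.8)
The plan is as follows. Since $\varphi(\S)=F[\Z]^\pm$ and $C$ is a size-$2$ primitive set inside $z\Z_p\cup z^{-1}\Z_p$, the projection $\varphi(C)$ is a $\varphi(\S)$-class contained in $\{z,z^{-1}\}$ and containing no torsion element, hence $\varphi(C)=\{z,z^{-1}\}$ and $C=\{a^iz,a^jz^{-1}\}$ for some integers $i,j$. The argument then divides according to whether $p\mid i+j$. If $p\mid i+j$, then $a^jz^{-1}=(a^iz)^{-1}$, so $C$ is an inverse pair and $\langle\supp\overline{C}\rangle=\langle a^iz\rangle$ is a nontrivial $\S$-subgroup of the form $\langle a^kz\rangle$; Corollary~\ref{cor:allz-srings} then gives at once that $\S$ is automorphic.

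So suppose $p\nmid i+j$. The first key step is to show that $T(\S)$ is discrete. Squaring gives $\overline{C}^2=a^{2i}z^2+2a^{i+j}+a^{2j}z^{-2}$, and Hadamard-multiplying by $\overline{\Z_p}\in\S$ isolates $2a^{i+j}$; hence $\{a^{i+j}\}$ is a singleton $\S$-class with $a^{i+j}\neq 1$. Since $T(\S)$ is automorphic over $\Z_p$ and $a^{i+j}$ generates $\Z_p$, the multiplier defining $T(\S)$ must be $1$, that is, $T(\S)=F[\Z_p]$. In particular every $\{a^k\}$ is an $\S$-class, so by Lemma~\ref{lem:primproduct} each translate $a^k\overline{C}$ equals a primitive simple quantity. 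Moreover, for every $t\ge 1$ the set $z^t\Z_p\cup z^{-t}\Z_p=\varphi^{-1}(\{z^t,z^{-t}\})$ is an $\S$-subset, and no $\S$-class inside it can be a singleton (its image in $\varphi(\S)=F[\Z]^\pm$ would be a non-symmetric singleton); hence every $\S$-class over $\{z^t,z^{-t}\}$ has size exactly $2$.

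Next I would produce one size-$2$ primitive set over each level $t\ge 1$ and spread it around by $a^k$-translates. For $p\nmid t$, Frobenius-stability of $\S$ gives $\overline{C}^{(t)}=a^{it}z^t+a^{jt}z^{-t}\in\S$, and this simple quantity is primitive because its projection $\{z^t,z^{-t}\}$ is a $\varphi(\S)$-class (so it cannot split into singletons). For $p\mid t$, where Frobenius is unavailable, take $C^{(t-1)}$, which is primitive by the previous case since $p\nmid t-1$, and compute
\[\overline{C}\,\overline{C^{(t-1)}}=a^{it}z^t+a^{jt}z^{-t}+a^{i+j(t-1)}z^{2-t}+a^{j+i(t-1)}z^{t-2};\]
Hadamard-multiplying by $\overline{z^t\Z_p\cup z^{-t}\Z_p}$ (using $t\ge p\ge 3$ so that $2-t,t-2\notin\{t,-t\}$) leaves $a^{it}z^t+a^{jt}z^{-t}$, again a primitive simple quantity. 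Translating each of these by the singletons $a^k$ (Lemma~\ref{lem:primproduct}) and counting cardinalities shows that the sets $\{a^mz^t,a^{m+t(j-i)}z^{-t}\}$, $m$ ranging over residues mod $p$, together with the singletons $\{a^k\}$, exhaust $\D(\S)$. Finally one checks that this partition is exactly the orbit partition of the order-$2$ automorphism $\tau$ of $G$ determined by $\tau(z)=a^{j-i}z^{-1}$, $\tau(a)=a$; therefore $\S=F[G]^{\langle\tau\rangle}$ is automorphic.

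I expect the main obstacle to be the levels $t$ divisible by $p$: there the Frobenius map cannot be used, so each such primitive set must be manufactured from a lower level by the product-then-Hadamard computation above, and one must track carefully which monomials survive the Hadamard projection as well as the degenerate coincidences that occur when $p\mid t$ forces $a^{it}=a^{jt}=1$ (so that the class over $\{z^t,z^{-t}\}$ literally becomes $\{z^t,z^{-t}\}$ itself) without disrupting the counting or the final identification with the $\tau$-orbits.
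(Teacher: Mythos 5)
Your proposal is correct, and its opening moves coincide with the paper's: you split on whether $C=\{a^iz,a^jz^{-1}\}$ is an inverse pair, handle the inverse-pair case via Corollary \ref{cor:allz-srings}, and otherwise square $\overline{C}$ to extract the singleton class $\{a^{i+j}\}$ and conclude that $T(\S)$ is discrete. Where you diverge is in finishing the second case. The paper observes that, $T(\S)$ being discrete, the translate $a^{-(i+j)/2}C=\{a^{(i-j)/2}z,a^{-(i-j)/2}z^{-1}\}$ is again primitive by Lemma \ref{lem:primproduct} and \emph{is} an inverse pair; hence $\langle a^{(i-j)/2}z\rangle$ is an $\S$-subgroup and Corollary \ref{cor:allz-srings} applies a second time, ending the proof in one line. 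You instead rebuild the entire partition of $\S$ level by level (Frobenius images for $p\nmid t$, the product-then-Hadamard construction for $p\mid t$, then $a^k$-translates) and identify it with the orbit partition of $\tau:z\mapsto a^{j-i}z^{-1}$, $a\mapsto a$. This is essentially a self-contained re-proof of the special case of Theorem \ref{lem:allz-srings}/Corollary \ref{cor:allz-srings} that the paper reuses; it buys you an explicit description of the automorphism realizing $\S$, at the cost of considerable length and of duplicating machinery already available. One small logical wrinkle: the inference ``no class in $z^t\Z_p\cup z^{-t}\Z_p$ is a singleton, hence every class there has size exactly $2$'' is a non sequitur as stated (a class could a priori have size $3$ or more); it is only justified afterwards, once you exhibit the $p$ disjoint primitive translates partitioning that $\S$-subset, so you should reorder or drop that sentence. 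With that repair the argument goes through, but the translation trick $a^{-(i+j)/2}C$ would have saved you the entire second page.
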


 \begin{proof}
 Suppose $C=\{a^kz,a^\ell z^{-1}\}$ is primitive for some $k$ and $\ell$. Then consider the product $\overline{C}^2=a^{2k}z^{2}+a^{2\ell}z^{-2}+2a^{k+\ell} \in \S$. If $a^{k+\ell}$ is the identity, then $\ell\equiv -k \pmod p$. In this case $\langle C \rangle =\langle a^kz\rangle \cong \Z$, which is an $\S$-subgroup. Then, by Corollary \ref{cor:allz-srings}, $\S$ is an automorphic Schur ring. If $a^{k+\ell}\neq 1$, then this means that $T(\S)$ is discrete, as the primitive set $\{a^{k+\ell}\}$ generates $\Z_p$. Then $a^{-(k+\ell)/2}C = \{a^{(k-\ell)/2}z, a^{-(k-\ell)/2}z^{-1}\}$, which is primitive by Lemma \ref{lem:primproduct}. Then $\langle a^{-(k+\ell)/2}C \rangle\cong\Z$. Again, we have that $\S$ is an automorphic Schur ring.
 \end{proof}

 \begin{Thm}\label{lem:relprime}
 Suppose $G=\Z\times \Z_p$, where $p$ is a prime. Let $\S$ be a Schur ring over $G$ such that $H$ is the maximal $\S$-subgroup contained in $\Z$. Let $T(\S)=F[\Z_p]^{\H}$ be an automorphic Schur ring with $|\H|=m$. If there is a primitive set $C\in \D(\S)$ such that $C\subseteq \Z\smallsetminus H$ and $\gcd(|C|,m)=1$, then $\S$ is a wedge product or an automorphic Schur ring.
 \end{Thm}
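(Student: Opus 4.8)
The plan is to peel off the situations already settled by earlier results, and then force the shape of $C$ by a counting argument driven by the maximality of $H$.

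\textbf{Reductions.} If $p\mid[\Z:H]$, then Corollary \ref{lem:trivial} already gives that $\S$ is a wedge product, so assume $p\nmid[\Z:H]$; writing $H=\langle z^h\rangle$, this means $\gcd(h,p)=1$. If $H=\Z$ there is no primitive set inside $\Z\smallsetminus H$, so the hypothesis forces $H<\Z$. Finally, if $\langle a^iz\rangle$ is an $\S$-subgroup for some integer $i$, then $\S$ is automorphic by Corollary \ref{cor:allz-srings} (which subsumes Theorem \ref{lem:allz-srings}), so assume no such $\S$-subgroup exists.

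\textbf{The counting argument.} Since $\varphi(\S)\in\{F[\Z],F[\Z]^\pm\}$, the hypothesis on $C$ means $\varphi(C)=\{z^t\}$ or $\{z^t,z^{-t}\}$ for some $t$ with $h\nmid t$; write $C=z^tA$, resp. $C=z^tA\cup z^{-t}B$ with $|A|=|B|$. Imitating the proof of the theorem preceding Corollary \ref{lem:trivial}, I would consider $\overline{C}\,\overline{C}^{(p-1)}$, which lies in $\S$ because $\gcd(p-1,p)=1$, and project it onto the $\S$-subset $z^{tp}\Z_p$, resp. $\{z^{tp},z^{-tp}\}\Z_p$, via the Hadamard product, obtaining $\mathcal{X}=z^{tp}\overline{A}\,\overline{A}^{*}$, resp. $\mathcal{X}=z^{tp}\overline{A}\,\overline{A}^{*}+z^{-tp}\overline{B}\,\overline{B}^{*}\in\S$. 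The elements of $\supp(\mathcal X)$ carrying the maximal coefficient $|A|$ form an $\S$-subset equal to $z^{tp}\stab_{\Z_p}(A)$, resp. $z^{tp}\stab_{\Z_p}(A)\cup z^{-tp}\stab_{\Z_p}(B)$ with $|\stab_{\Z_p}(A)|=|\stab_{\Z_p}(B)|$ forced by $\varphi$ landing in a symmetric ring. As $\Z_p$ is simple, there are only two possibilities. Either these stabilizers are trivial, whence $\{z^{tp}\}$, resp. $\{z^{tp},z^{-tp}\}$, is an $\S$-subset and $\langle z^{tp}\rangle$ an $\S$-subgroup of $\Z$; by maximality $\langle z^{tp}\rangle\le H$, so $h\mid tp$, hence $h\mid t$ since $\gcd(h,p)=1$ --- contradicting $h\nmid t$. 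Or the stabilizers equal $\Z_p$, forcing $A=\Z_p$ (and $B=\Z_p$), i.e. $C$ is a union of $\Z_p$-cosets; this is where Lemma \ref{cor:left-stab} (the ``two tycoons'' observation) is applied. The coprimality hypothesis $\gcd(|C|,m)=1$ enters to control the torsion side of this dichotomy: because the nonidentity $T(\S)$-classes all have size $m$, Lemma \ref{lem:primproduct} makes each product $\overline{C}\,\overline{D}$, for $D\in\D(T(\S))\smallsetminus\{1\}$, a single $\S$-class, and Lemma \ref{lem:sizes} pins the multiplicities, which is what forces $A$ into coset shape rather than merely producing some nontrivial stabilizer elsewhere.

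\textbf{Conclusion, and the main obstacle.} The same argument applies to every $\S$-class $C'$ with $\varphi(C')\not\subseteq H$, since such a class is disjoint from the $\S$-subgroup $H\times\Z_p$ and projects outside $H$. Thus every $\S$-class not contained in $H\times\Z_p$ is a union of $\Z_p$-cosets. If $H>1$, this exhibits $1<\Z_p\le H\times\Z_p<G$ as a wedge-decomposition of $\S$, so $\S$ is a wedge product; if $H=1$, then $H\times\Z_p=\Z_p$ and $\S=T(\S)\times\varphi(\S)$, which equals $F[\Z_p]^{\H}\times F[\Z]$ or $F[\Z_p]^{\H}\times F[\Z]^\pm$ and is therefore automorphic. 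I expect the delicate point to be the symmetric case of the counting step --- verifying that the cross terms $z^{\pm(tp-2t)}\overline{A}\,\overline{B}^{*}$ are killed by the Hadamard projection, that the top-coefficient set is exactly the claimed union of translated stabilizers with matching sizes, and that the coprimality hypothesis is deployed correctly to land in the ``coset shape'' alternative --- together with the separate, routine bookkeeping in the degenerate case $H=1$.
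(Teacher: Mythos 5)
Your reductions dispose of every case except the one the theorem is actually about, and then the argument you supply does not work in that case. When $p\mid[\Z:H]$ you cite Corollary \ref{lem:trivial} and move on, but that corollary's wedge decomposition is $1<\Z_p\le K\times\Z_p<G$ with $[K:H]=p$; when $[\Z:H]=p$ exactly, $K=\Z$ and $K\times\Z_p=G$, so the decomposition degenerates and no wedge product is produced (the assertion ``every class outside $K\times\Z_p$ is a union of $\Z_p$-cosets'' is vacuous). That boundary case $H=\Z^{(p)}$ is precisely the case the paper reduces to, the only case in which the hypothesis $\gcd(|C|,m)=1$ does any work, and the case that Corollary \ref{cor:relprime} and everything downstream rely on. Your tycoon argument cannot rescue it: for $C\subseteq z^t\Z_p$ with $p\nmid t$, the ``trivial stabilizer'' horn of your dichotomy yields the $\S$-subgroup $\langle z^{tp}\rangle$, which already lies inside $H=\Z^{(p)}$ and so does not contradict maximality. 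In the one case that matters, $\overline{C}\,\overline{C}^{(p-1)}$ tells you nothing. (Where your tycoon computation does apply --- $1<H<\Z$ with $\gcd(p,[\Z:H])=1$ --- it is correct, but it is just a rerun of the theorem preceding Corollary \ref{lem:trivial} and never touches the coprimality hypothesis.)

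The paper's proof fills exactly this hole, and this is where $\gcd(|C|,m)=1$ enters --- not in the vague ``pins the multiplicities'' role you assign it. With $H=\Z^{(p)}$, normalize $t=1$ by Theorem \ref{lem:primfresh} and dispose of $|A|=1$ (singleton argument, or Corollary \ref{lem:mincase} in the symmetric case). Otherwise pick distinct $a,a^k\in A$ and let $D\in\D(T(\S))$ contain $a^{k-1}$. Since $|D|=m$ and $\gcd(|C|,m)=1$, Lemma \ref{lem:primproduct} forces $\overline{C}\,\overline{D}=\lambda\overline{E}$ for a single class $E\subseteq z\Z_p$; as $a^kz\in C\cap CD$, we get $E=C$, hence $CD=C$, hence $D\subseteq G_C$ and $G_C\supseteq\Z_p$, i.e.\ $C=z\Z_p$. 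That step is absent from your proposal. A smaller error: in your $H=1$ case, full cosets outside $\Z_p$ give the wedge product $T(\S)\wedge\varphi(\S)$, not the direct product $T(\S)\times\varphi(\S)$, and that ring is in general not automorphic --- it still satisfies the stated conclusion, but only because it is a wedge product.
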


 \begin{proof}
 Because of Corollary \ref{lem:trivial} and Theorem \ref{lem:allz-srings}, it suffices to prove the case where $H=\Z^{(p)}$. First, suppose $\varphi(\S)=\FZ$. Suppose that $\gcd(|C|,m)=1$ for some primitive set $C=z^tA$, for some $A\subseteq \Z_p$. By Theorem \ref{lem:primfresh}, we may suppose $t=1$. If $|A|=1$, then $\S$ is an automorphic Schur ring by previous reasoning. Suppose then that $|A|>1$. Without the loss of generality, we may suppose that $a,a^k \in A$ are distinct elements. Take $D\in \D(T(\S))$ such that $a^{k-1}\in D$. By Lemma \ref{lem:primproduct}, since $|D|=m$, we know that $\overline{C}\,\overline{D}=\lambda\overline{E}$, where $E\in \D(\S)$. Necessarily, $E\subseteq z\Z_p$ and $(az)a^{k-1}=a^kz\in CD=E$. As $a^kz\in C\cap E$ is contained in two primitive sets, we conclude that $C=E$, that is, $CD=C$. So, $D\subseteq G_C$, which implies $G_C=\Z_p$. In particular,  $A=\Z_p$, that is, $C=z\Z_p$. Hence, $\S=\S_H\wedge \FZ$.

 The symmetric case is handled similarly. If $\varphi(\S)=\FZ^\pm$ and $\gcd(|C|,m)=1$ for some primitive set $C=zA\cup z^{-1}B$, for some $A, B\subseteq \Z_p$, then the case where $|A|=1$ is handled by Corollary \ref{lem:mincase} as $|C|=2$ and the case where $|A|>1$ is handled just like the discrete case, that is, we conclude that $A=B=\Z_p$ by considering stabilizers. Thus, $\S$ is either automorphic or wedge decomposable. 
 \end{proof}

 \begin{Cor}\label{cor:relprime} 
 Suppose $G=\Z\times \Z_p$ where $p$ is a prime. Let $\S$ be a Schur ring over $G$ such that $\Z^{(p)}$ is an $\S$-subgroup.  Let $T(\S)=F[\Z_p]^{\H}$ be an automorphic Schur ring with $|\H|=m$. If there is a primitive set $C\in \D(\S)$ such that $C\subseteq \Z\smallsetminus \Z^{(p)}$ and $\gcd(|C|,m)=1$, then $\S$ is an automorphic Schur ring.
 \end{Cor}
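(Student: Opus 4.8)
The plan is to read this off from Theorem~\ref{lem:relprime}, the extra leverage being that $\Z^{(p)}$ being an $\S$-subgroup pins the maximal $\S$-subgroup of $\Z$ down to the two possibilities $\Z^{(p)}$ and $\Z$, since $[\Z:\Z^{(p)}]=p$ is prime. First I would dispose of the case where the maximal $\S$-subgroup $H\le\Z$ is $\Z$ itself: then $\Z$ is an $\S$-subgroup and Theorem~\ref{lem:allz-srings} gives that $\S$ is automorphic. So assume $H=\Z^{(p)}$; then the hypotheses of Theorem~\ref{lem:relprime} hold verbatim with this $H$, and that theorem forces $\S$ to be automorphic (and we are done) or a wedge product. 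Thus the real content of the corollary is to show that, once $\Z^{(p)}$ is known to be an $\S$-subgroup, the wedge-product alternative is automorphic anyway.

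To handle that alternative, suppose $\S$ has a wedge-decomposition $1<K\le H_0<G$. Since $K\trianglelefteq G$ is finite and nontrivial and $T(G)=\Z_p$ has prime order, necessarily $K=\Z_p$. Because $\Z^{(p)}$ is an $\S$-subgroup that contains no full $\Z_p$-coset, no $\S$-class inside $\Z^{(p)}$ can be a union of $\Z_p$-cosets, so every such $\S$-class lies in $H_0$; hence $\Z^{(p)}\le H_0$ and therefore $M:=\langle\Z^{(p)},\Z_p\rangle=\Z^{(p)}\times\Z_p\le H_0$. As $[G:M]=p$ is prime and $H_0$ is proper, $H_0=M$. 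Consequently every primitive set of $\S$ lying outside $M$ is a union of $\Z_p$-cosets; since it also maps under $\varphi$ onto a class of $\varphi(\S)$, namely $\{z^t\}$ or $\{z^t,z^{-t}\}$ with $p\nmid t$, I can conclude that each such primitive set is exactly a coset $z^t\Z_p$ (when $\varphi(\S)$ is discrete) or a pair $z^t\Z_p\cup z^{-t}\Z_p$ (when $\varphi(\S)$ is symmetric).

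Next I would analyze $\S_M$. Under the identification $M\cong\Z\times\Z_p$, the subgroup $\Z^{(p)}$ is the $\Z$-direct-factor and is an $\S_M$-subgroup, so Theorem~\ref{lem:allz-srings} applies: $\S_M$ is automorphic, and inspecting its proof, $\S_M=F[M]^{\H_0}$ where $\H_0$ is generated by automorphisms acting only on the torsion factor together with, possibly, an inversion-type element — in particular $\H_0$ contains no shear $z^p\mapsto a^jz^p$. The crucial observation is then that the shear $\rho\in\Aut(G)$, given by $z\mapsto az$, $a\mapsto a$, restricts to the identity on $M$ while its orbits outside $M$ are precisely the cosets $z^t\Z_p$ with $p\nmid t$. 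Extending $\H_0$ to $\tilde\H_0\le\Aut(G)$ (legitimate since its generators are restrictions of the maps $\sigma_m$ and $*$) and setting $\H:=\langle\rho,\tilde\H_0\rangle$, an orbit-by-orbit comparison gives $\D(F[G]^{\H})=\D(\S)$: inside $M$ the contribution of $\rho$ is trivial so the $\H$-orbits recover the $\H_0$-orbits $=\D(\S_M)$, and outside $M$ the $\rho$-orbits are the cosets $z^t\Z_p$, merged into symmetric pairs exactly when $\tilde\H_0$ (equivalently $\varphi(\S)$) is symmetric — which is precisely the shape established in the previous step. Hence $\S=F[G]^{\H}$ is automorphic.

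I expect the last step to be the genuine obstacle. A generic wedge product of the form $F[H\times\Z_p]^{\H_0}\wedge F[\Z]$ is \emph{not} automorphic, so the argument must exploit the special index-$p$ geometry: the shear $\rho$ fixes $\Z^{(p)}\times\Z_p$ pointwise and yet realizes every outside $\Z_p$-coset as a full orbit, which is exactly what ``glues'' the wedge into an orbit ring. One also has to know that the automorphism group realizing $\S_M$ contains no shear, and for this it is not enough to cite Theorem~\ref{lem:allz-srings}; one must use the explicit trichotomy ($F[\Z]\times T(\S_M)$, $F[\Z]^\pm\times T(\S_M)$, or $F[M]^{\langle\sigma_{-m'}*\rangle}$) produced in its proof.
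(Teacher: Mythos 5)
Your proposal is correct and follows the paper's own proof: reduce to the case where $\Z^{(p)}$ is the maximal $\S$-subgroup, invoke Theorem~\ref{lem:relprime}, and then observe that the surviving wedge-product alternative is an orbit Schur ring for a subgroup of $\Aut(G)$ containing the shear $\rho$. The paper dispatches that last step in one sentence (identifying the wedge products as $\FG^{\langle \rho\rangle}$ and $\FG^{\langle \rho, *\rangle}$), whereas your orbit-by-orbit verification with $\langle \rho, \tilde{\H}_0\rangle$ supplies the omitted details and correctly accommodates a nontrivial $T(\S)$.
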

 \begin{proof} If $\langle a^iz\rangle$ is an $\S$-subgroup for some $i$, then $\S$ is automorphic by Corollary \ref{cor:allz-srings}. So, we may assume that $\Z^{(p)}$ is a maximal $\S$-subgroup. By Theorem \ref{lem:relprime}, $\S$ is either automorphic or a wedge product of the form $\FZ \wedge \FZ$ or $\FZ^\pm \wedge \FZ^\pm$, with decomposition $1< H\times \Z_p\le G$. In the latter two cases, these wedge products are equal to $\FG^{\langle \rho\rangle}$ and $\FG^{\langle \rho, *\rangle}$, respectively.
 \end{proof}

 \begin{Thm}\label{cor:relprime2} 
 Suppose $G=\Z\times \Z_p$ where $p$ is a prime. Let $\S$ be a Schur ring over $G$ such that $\Z^{(p)}$ is an $\S$-subgroup.  Let $T(\S)=F[\Z_p]^{\H}$ be an automorphic Schur ring with $|\H|=m$. If $m$ is a power of a prime, then $\S$ is an automorphic Schur ring.
 \end{Thm}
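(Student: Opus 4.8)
The plan is to split on the image $\varphi(\S)$ and on the prime $\ell$ in $m=\ell^k$, reducing all but one case to Corollaries \ref{cor:relprime} and \ref{cor:allz-srings}. First I would put $\S$ in a standard position: if $\langle a^iz\rangle$ is an $\S$-subgroup for some integer $i$ (in particular if $\Z$ is), then $\S$ is automorphic by Corollary \ref{cor:allz-srings}, so assume not; since $\Z^{(p)}$ is an $\S$-subgroup and $p$ is prime, the only subgroups of $\Z$ between $\Z^{(p)}$ and $\Z$ are these two, so $\Z^{(p)}$ is the maximal $\S$-subgroup contained in $\Z$. Applying Theorem \ref{lem:allz-srings} to $\S_{\Z^{(p)}\times\Z_p}$ over $\Z^{(p)}\times\Z_p\cong\Z\times\Z_p$ (in which $\Z^{(p)}$ is an $\S_{\Z^{(p)}\times\Z_p}$-subgroup) shows $\S_{\Z^{(p)}\times\Z_p}$ is automorphic; inspecting its two possible forms when $\varphi(\S)=F[\Z]^\pm$, one sees that $\{z^p,z^{-p}\}$ is then an $\S$-class and that every other primitive set of $\S$ contained in $\Z^{(p)}\times\Z_p$ has $\Z_p$-part of two-power cardinality.

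For the easy cases: if $\varphi(\S)=F[\Z]$, the $\S$-subset $z\Z_p$ has cardinality $p$, hence is a disjoint union of primitive sets whose cardinalities sum to $p$; since $\ell\mid m\mid p-1$ gives $\ell\neq p$, some such $C$ has $\ell\nmid|C|$, so $\gcd(|C|,m)=1$ with $C\subseteq\Z\smallsetminus\Z^{(p)}$, and Corollary \ref{cor:relprime} applies. If $\varphi(\S)=F[\Z]^\pm$, every primitive set in $z\Z_p\cup z^{-1}\Z_p$ has the form $zA\cup z^{-1}B$ with $|A|=|B|$, so these cardinalities are the $2|A_i|$ with $\sum_i|A_i|=p$; if $\ell$ is odd then, as $\ell\neq p$, some $|A_i|$ is prime to $\ell$, whence $\gcd(2|A_i|,\ell^k)=1$ and Corollary \ref{cor:relprime} applies once more.

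This leaves the hard case $\varphi(\S)=F[\Z]^\pm$ with $\ell=2$, where every primitive set in $z\Z_p\cup z^{-1}\Z_p$ has even cardinality and Corollary \ref{cor:relprime} is useless. If some primitive set there has cardinality $2$, then $\S$ is automorphic by Corollary \ref{lem:mincase}; otherwise all $|A_i|\geq 2$, and I would aim to show $z\Z_p\cup z^{-1}\Z_p$ is a single primitive set. That would suffice: by Theorem \ref{lem:primfresh} each $z^t\Z_p\cup z^{-t}\Z_p$ ($p\nmid t$) is then a single primitive set, so $1<\Z_p\le\Z^{(p)}\times\Z_p<G$ is a wedge-decomposition of $\S$ and $\S=\S_{\Z^{(p)}\times\Z_p}\wedge F[\Z]^\pm$, which is automorphic since $\S_{\Z^{(p)}\times\Z_p}$ is. To force $A_i=B_i=\Z_p$ I would combine: the explicit knowledge of the primitive sets of $\S$ inside $\Z^{(p)}\times\Z_p$ (in particular $\{z^p,z^{-p}\}$); the element $z^p\overline{A_i}\,\overline{A_i}^*+z^{-p}\overline{B_i}\,\overline{B_i}^*$ obtained by Hadamard-restricting $\overline{C_i}\,\overline{C_i}^{(p-1)}$ to $\overline{z^p\Z_p\cup z^{-p}\Z_p}$, which constrains the autocorrelations of $A_i$ and, through the tycoon lemma \ref{cor:left-stab}, shows a second tycoon of $(A_i,A_i)$ would already give $A_i=\Z_p$; and the length relations of Lemmas \ref{lem:sizes} and \ref{lem:primproduct} applied to the various products of the $C_i$ with one another and with $\{z^p,z^{-p}\}$. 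The main obstacle is precisely this last step: with $m=2^k$ all cardinalities involved are even, so the divisibility and coprimality shortcuts that close the other cases fail, and the needed contradiction must be squeezed out of the finer combinatorics of $A_i,B_i$ together with the rigidity of the automorphic ring $\S_{\Z^{(p)}\times\Z_p}$; I expect that to be the technical heart of the proof.
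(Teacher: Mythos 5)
Your reduction to the easy cases matches the paper exactly: assume $\Z^{(p)}$ is maximal (else Corollary \ref{cor:allz-srings}), note $|z\Z_p|=p$ forces some primitive $C\subseteq z\Z_p$ with $q\nmid|C|$ in the discrete case, and $|\{z,z^{-1}\}\Z_p|=2p$ does the same when $q$ is odd in the symmetric case, so Corollary \ref{cor:relprime} closes both. But the symmetric case with $m=2^\ell$, which you correctly identify as the technical heart, is left as a plan rather than a proof, and that plan has a concrete defect. The tycoon mechanism you propose --- Hadamard-restricting $\overline{C}\,\overline{C}^{(p-1)}$ to $\overline{\{z^p,z^{-p}\}\Z_p}$ and arguing that the absence of a second tycoon makes $\{z^p,z^{-p}\}$ an $\S$-class --- produces no contradiction here, because $\Z^{(p)}$ being an $\S$-subgroup is precisely the standing hypothesis; the earlier theorem that extracts a second tycoon does so by contradicting the \emph{maximality} of $H$ via $z^{nt}\notin H$, and that lever is unavailable once $H=\Z^{(p)}$ and $t=1$. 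So nothing in your toolkit forces $A_i=\Z_p$, and the gap is real.

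The paper closes this case by a different route. It picks $C=zA\cup z^{-1}B$ with $|A|$ of minimal odd cardinality (possible since the $|A_i|$ sum to the odd number $p$), takes $a,a^k\in A$ and $D\in\D(T(\S))$ containing $a^{k-1}$, and applies Lemma \ref{lem:sizes} to the multiplicity $\lambda$ of $\overline{C}$ in $\overline{C}\,\overline{D}$: the relation $2\lambda|A|=\mu m$ forces $\lambda=2^{\ell-1}$ or $2^\ell$. The value $2^\ell$ gives $D\subseteq G_C$ and hence $A=\Z_p$ (your desired conclusion); if instead $\lambda=2^{\ell-1}$ for every relevant $D$, a count of multiplicities in $\overline{C}^*\overline{C}$ yields $2|A|=1+2^{\ell-1}|\D|$, whose parity forces $\ell=1$, i.e.\ $m=2$, and then a finite chain $C_0=C, C_1, C_2,\ldots$ of classes linked by multiplication by $a^{k-1}+a^{1-k}$ must eventually close up, exhibiting $\{a^{k-1},a^{1-k}\}$ as a stabilizer of $\bigcup_j C_j$ and again forcing $|A|\in\{1,p\}$. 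You would need to supply an argument of comparable substance (this counting-and-chain argument, or a genuinely new one) before the $m=2^\ell$ case can be considered proved.
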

 \begin{proof} Let $m=q^k$, where $q$ is prime. As $|z\Z_p|=p$, there is some primitive subset $C$ of $z\Z_p$ such that $q\nmid |C|$. Then Corollary \ref{cor:relprime} applies.
 
 Consider the symmetric case. Note that $|\{z,z^{-1}\}\Z_p|=2p$, so if $q$ is an odd prime, then there is some primitive subset $C$ of $\{z,z^{-1}\}\Z_p$ such that $q\nmid |C|$. Then Corollary \ref{cor:relprime} applies again.

 Suppose $m=2^\ell$ for some $\ell\geq 1$. 
 We may choose the primitive set $C=zA\cup z^{-1}B$ so that $A$ has the smallest possible odd length (such a choice exists as the union of all possible sets $A$ is $\Z_p$ and not all lengths could be even). Thus, $\gcd(|A|,m)=1$. If $|A|=1$, then $\S$ would be automorphic by Corollary \ref{lem:mincase}. So, without the loss of generality, we may assume that $a, a^k\in A$. Let $D\in \D(T(\S))$ such that $a^{k-1}\in D$. Thus, $\overline{C}\, \overline{D} = \lambda \overline{C} + \cdots$. Then $\overline{C}^*\overline{C} = \mu\overline{D}+\cdots$ where $\lambda|C| = \mu|D|$ by Lemma \ref{lem:sizes}. So, $2\lambda|A|=\mu m$, which implies that $m\mid 2\lambda$. Hence, $\lambda = 2^{\ell-1}t$ for some nonzero integer $t$. Now, as $1\le \lambda \le \min(|C|,|D|) = \min(2|A|, 2^\ell)$, we have $t=1$ or $t=2$.

 If $t=2$, then $\overline{C}\,\overline{D} = 2^\ell\overline{C}$ as both sides of the equality count the same number of elements. Since $D$ necessarily stabilizes $A$, we see again that $A=\Z_p$, that is, $C=\{z,z^{-1}\}\Z_p$. By Theorem \ref{lem:primfresh}, the primitive sets belonging to $G\smallsetminus (\Z^{(p)}\times \Z_p)$ all have the form $\{z^k,z^{-k}\}\Z_p$ for some integer $k$. As $\S_{\Z^{(p)}\times \Z_{p}} = F[\Z^{(p)}]\times F[\Z_p]^\H$, we have $\S = \FG^{\langle \rho, \sigma_r, *\rangle}$, where $|\sigma_r|=m$ in $\Aut(\Z_p)$. 
 
 In the above argument, the choice of $t=1$ or $t=2$ depends upon the choice of the primitive set $D$. This primitive set could have been interchanged for any other primitive set $D'$ such that $a^{u-v}\in D'$ and $a^u, a^v\in A$. Let $\D := \{D\in \D(T(\S))\mid xy^{-1}\in D \text{ for some } x,y\in C\}$. For any set $D\in \D$, if $t=2$, then the above argument applies, and we are done. Suppose then that $t=1$ for all $D\in \D$. Then $\overline{C}\,\overline{D} = \frac{m}{2}\overline{C} + \cdots$, where the right-hand side so far only accounts for half of the elements in the product $CD$. On the other hand, 
 \[\overline{C}\, \overline{C}^* = 2|A| + \sum_{D\in\D} \mu_DD,\] where $\mu_D|D| = \frac{m}{2}|C|$ by Lemma \ref{lem:sizes}. Hence, $\mu_D = \dfrac{m(2|A|)}{2m} = |A|$. Counting multiplicities in the above equation, the left-hand side contains $|C|^2 = 4|A|^2$ elements, and the right-hand side contains $2|A| + |\D||A|m$ elements. Hence, $4|A|^2 = 2|A|+|\D||A|m$, or, simply, $2|A| = 1+2^{\ell-1}|\D|$. As the left-hand side is clearly even, the right-hand side must also be even, which implies that $\ell=1$, that is, $m=2$ and $\H=\langle *\rangle$. This observation simplifies the equation involving $\overline{C}\,\overline{D}$ to $\overline{C}(a^{k-1}+a^{1-k}) = \overline{C} + \overline{C_1}$, where $C_1\in \D(\S)$ and $|C_1|=|C|$. This follows from the minimality of the choice of $A$. We note that $a^{2-k}z = az(a^{1-k})\in C\{a^{k-1}, a^{1-k}\}$. If $a^{2-k}\in A$, then $az = a^{k}z(a^{1-k}) = a^{2-k}z(a^{k-1})$ appears with multiplicity 2 in $C\{a^{k-1}, a^{1-k}\}$. This implies $\overline{C}(a^{k-1} + a^{1-k}) = 2\overline{C}$, a contradiction. Thus, $a^{2-k}z\in C_1$. 

 Now, consider $\overline{C_1}(a^{k-1}+a^{1-k})$. As $a^{2-k}z\in C_1$, we know $az\in C_1\{a^{k-1}, a^{1-k}\}$. Thus, $\overline{C_1}(a^{k-1}+a^{k-1}) = \overline{C} + \overline{C_2}$, where $C_2\in \D(\S)$ and $|C|=|C_2|$, by similar reasoning as before. If $C_2=C_1$, then $\{a^{k-1}, a^{1-k}\}$ stabilizes $C\cup C_1$. This implies that $C\cup C_1 = \{z,z^{-1}\}\Z_p$, but this implies that $|A|$ divides $p$. Then $|A|=1$ or $|A|=p$. Both of these cases imply $\S$ is automorphic. 

 Suppose then that $C_2\neq C_1$. As $a^{3-2k}z = a^{2-k}z(a^{1-k})\in C_1\{a^{k-1}, a^{1-k}\}$, $a^{3-2k}z\in C$ or $a^{3-2k}z\in C_2$. In the former case, $a^{2-k}z = az(a^{1-k}) = a^{3-2k}z(a^{k-1})$ appears with multiplicity 2 in $C\{a^{k-1}, a^{1-k}\}$, which again implies that $C=C_1$. Thus, $a^{3-2k}z\in C_2$, and this implies that the process continues. 

 Let $C_0=C$. Eventually, there will exist some integer $n$ where $C_n=C_i$ for some  $i<n$. Then $\left(\sum_{j=0}^n \overline{C}_j\right)(a^{k-1}+a^{1-k}) = \sum_{j=0}^n \mu_j\overline{C}_j$, which shows that $\{a^{k-1}, a^{1-k}\}$ stabilizes $\bigcup_{j=0}^n C_j$. This again implies that $|A|=1$ or $|A|=p$. 
 Thus, $\S$ is automorphic if $m=2^\ell$.
 \end{proof}

 \begin{Thm}\label{thm:twosets} Suppose $G=\Z\times \Z_p$ where $p$ is a prime.\footnote{In the proof of Theorem \ref{thm:twosets}, we will assume that $p$ is an odd prime. The case when $p=2$ is fairly simple and is already addressed in \cite{InfiniteI}.} Let $\S$ be a Schur ring over $G$ such that $\Z^{(p)}$ is an $\S$-subgroup. Suppose $z\Z_p$ or $\{z,z^{-1}\}\Z_p$ is the union of exactly two primitive sets. Then $\S$ is an automorphic Schur ring.
 \end{Thm}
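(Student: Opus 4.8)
By the general theory of Schur rings over $G=\Z\times\Z_p$, the projection $\varphi\colon G\to\Z$ has $\ker\varphi=\Z_p$ as an $\S$‑subgroup, so $\varphi(\S)$ is one of the two Schur rings over $\Z$, namely $F[\Z]$ or $F[\Z]^\pm$; I would split the proof on this. If $\varphi(\S)=F[\Z]$, the hypothesis gives $z\Z_p=C_1\sqcup C_2$ with $C_i\in\D(\S)$; since $\varphi(C_i)$ is a singleton $\varphi(\S)$‑class inside $\{z\}$, each $C_i\subseteq z\Z_p$, so $C_i=zA_i$ with $A_1\sqcup A_2=\Z_p$. If $\varphi(\S)=F[\Z]^\pm$, the hypothesis gives $\{z,z^{-1}\}\Z_p=C_1\sqcup C_2$; each $\varphi(C_i)$ is a $\varphi(\S)$‑class inside $\{z,z^{-1}\}$, hence equals $\{z,z^{-1}\}$, so $C_i=zA_i\cup z^{-1}B_i$ with $A_1\sqcup A_2=B_1\sqcup B_2=\Z_p$, and constancy of the intersection numbers of $C_i$ with the cosets of $\ker\varphi$ forces $|A_i|=|B_i|$. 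In both cases let $n_i$ denote $|A_i|$; then $n_1+n_2=p$ and $1\le n_1\le p-1$. If $n_1=1$ (equivalently, one of the two classes has a singleton $z$‑part), then in the discrete case $C_1=\{a^jz\}$ and $\langle a^jz\rangle=\langle\supp\overline{C_1}\rangle\cong\Z$ is an $\S$‑subgroup, so $\S$ is automorphic by Corollary~\ref{cor:allz-srings}; in the symmetric case $|C_1|=2$ and $C_1\subseteq z\Z_p\cup z^{-1}\Z_p$, so $\S$ is automorphic by Corollary~\ref{lem:mincase}. So it remains to reach a contradiction when $2\le n_1\le p-2$.

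The engine of the argument is the single algebra relation $\overline{C_1}^{\,2}\in\S$. Since $p$ is odd, $\gcd(2,p)=1$, so Theorem~\ref{lem:primfresh} (applicable because $\Z^{(p)}$ is an $\S$‑subgroup) shows that $z^2\Z_p$ splits into exactly the two primitive sets $C_i^{(2)}=z^2A_i^{(2)}$. In the discrete case $\overline{C_1}^{\,2}=z^2\overline{A_1}^{\,2}$ is supported on $z^2\Z_p$, so $\overline{A_1}^{\,2}=u\,\overline{A_1^{(2)}}+v\,\overline{A_2^{(2)}}$ for integers $u,v\ge0$; in the symmetric case $\overline{C_1}^{\,2}$ additionally has a $\Z_p$‑component and a $z^{-2}\Z_p$‑component, but Hadamard‑multiplying by the $\S$‑element $\overline{\{z^2,z^{-2}\}\Z_p}$ and then reading off the $z^2\Z_p$‑component yields the same relation $\overline{A_1}^{\,2}=u\,\overline{A_1^{(2)}}+v\,\overline{A_2^{(2)}}$.

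Now I would apply the characters $\chi_t\colon a^i\mapsto\zeta^{ti}$ of $\Z_p$, where $\zeta$ is a fixed primitive $p$th root of unity and $t\in\mathbb{Z}$. Writing $f(t)=\chi_t(\overline{A_1})$ and using $\overline{A_1}+\overline{A_2}=\overline{\Z_p}$, the relation becomes $f(t)^2=(u-v)f(2t)$ for all $t$ with $p\nmid t$. Because $p$ is prime, the only $\mathbb{Z}$‑relation among $1,\zeta,\dots,\zeta^{p-1}$ is $\sum_i\zeta^i=0$; hence a sum of $n_1$ distinct $p$th roots of unity vanishes only if $n_1\in\{0,p\}$, so $2\le n_1\le p-2$ forces $f(t)\ne0$ for every $t$ with $p\nmid t$, and in particular $\kappa:=u-v\ne0$. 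Iterating $f(2t)=f(t)^2/\kappa$ along the orbit of $t$ under multiplication by $2$ (of length dividing $d=\operatorname{ord}_p(2)$) and using $2^d\equiv1\pmod p$ gives $(f(t)/\kappa)^{2^d-1}=1$. Thus $f(t)/\kappa$ is a $(2^d-1)$th root of unity lying in $\mathbb{Q}(\zeta)$; since the roots of unity of $\mathbb{Q}(\zeta_p)$ form $\mu_{2p}$ and $\gcd(2^d-1,2p)=p$ (because $2^d-1$ is odd and divisible by $p$), $f(t)/\kappa$ is in fact a $p$th root of unity, so $f(t)=\kappa\,\zeta^{j(t)}$ for some integer $j(t)$. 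Fixing one $t$ with $p\nmid t$ then gives $\sum_{b\in B}\zeta^b=\kappa\in\mathbb{Z}$ for the $n_1$‑element set $B=\{ti-j(t):a^i\in A_1\}$; the same relation‑module fact forces $n_1\in\{0,1,p-1,p\}$, contradicting $2\le n_1\le p-2$. This settles both cases, and $\S$ is automorphic. (The excluded prime $p=2$ is already handled in \cite{InfiniteI}.)

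I expect the delicate part to be precisely this middle–to–final passage: realizing that pushing the product $\overline{C_1}^{\,2}$ through the Frobenius‑controlled coset $z^2\Z_p$ produces the clean quadratic recursion $f(t)^2=\kappa f(2t)$, and then extracting from it — via the $\langle2\rangle$‑orbit and the list of roots of unity in $\mathbb{Q}(\zeta_p)$ — that $f$ is forced to be $\kappa$ times a $p$th root of unity. The case analysis, the identity $\overline{A_1}+\overline{A_2}=\overline{\Z_p}$, and the Hadamard bookkeeping in the symmetric case are all routine. (If one prefers to stay closer to the preceding results, one can first run a length‑counting argument on $\overline{C_1}\,\overline{D}$ for a nontrivial torsion class $D$ — using $\gcd(n_1,n_2)=1$, automatic from $n_1+n_2=p$ — to force $T(\S)$ to be the trivial Schur ring over $\Z_p$ and then invoke Theorem~\ref{cor:relprime2} or Corollary~\ref{cor:relprime} where applicable; but the character argument above makes that reduction unnecessary.)
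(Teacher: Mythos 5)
Your proof is correct, but its engine is genuinely different from the paper's. Both arguments start from the same observation — since $\Z^{(p)}$ is an $\S$-subgroup and $\gcd(2,p)=1$, Theorem~\ref{lem:primfresh} forces $z^2\Z_p$ (resp.\ $\{z^2,z^{-2}\}\Z_p$) to split exactly as $C_1^{(2)}\sqcup C_2^{(2)}$ — but they diverge from there. The paper exploits $\gcd(|C_1|,|C_2|)=1$ (automatic since the sizes sum to $p$) to apply Lemma~\ref{lem:primproduct}, getting $\overline{C_1}\,\overline{C_2}=\lambda\overline{E}$; a multiplicity bound identifies $E=C_2^{(2)}$ with $\lambda=|C_1|$, and subtracting from $\overline{C_1}\cdot z\overline{\Z_p}$ yields the clean identity $\overline{A}^{\,2}=|A|\,\overline{A^{(2)}}$, which is then killed by the tycoon machinery of Lemma~\ref{cor:left-stab}: every element of $A^2$ is a tycoon of $(A,A^*)$, so $|A|=1$ or $A=\Z_p$. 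You instead leave the expansion $\overline{A_1}^{\,2}=u\,\overline{A_1^{(2)}}+v\,\overline{A_2^{(2)}}$ with undetermined coefficients and finish with characters of $\Z_p$, the fact that the relation module of the $p$th roots of unity is generated by the all-ones vector, and the determination of the roots of unity in $\mathbb{Q}(\zeta_p)$. This buys you independence from Lemmas~\ref{lem:sizes}--\ref{cor:left-stab} (you never need to pin down $u$ and $v$, so the preliminary step $\overline{C_1}\,\overline{C_2}=\lambda\overline{C_2^{(2)}}$ disappears), at the cost of importing standard cyclotomic number theory where the paper stays entirely inside its elementary Schur-ring toolkit; your recursion $f(t)^2=\kappa f(2t)$ along the $\langle 2\rangle$-orbit is the genuinely new idea. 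One small omission: your case split assumes that in the discrete case the hypothesis is realized by $z\Z_p=C_1\sqcup C_2$, but the theorem also allows $\varphi(\S)=\FZ$ with $\{z,z^{-1}\}\Z_p$ the union of exactly two primitive sets; since $z\Z_p$ is then an $\S$-subset, those two classes must be $z\Z_p$ and $z^{-1}\Z_p$ (so ``$n_1=p$'', which your trichotomy does not reach), and one concludes directly, as the paper does, that $\S=\FG^{\langle\rho,\sigma_m\rangle}$. That sub-case should be added, but it is a one-line fix rather than a flaw in the main argument.
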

 \begin{proof}
 First, consider the discrete case. If $\{z,z^{-1}\}\Z_p$ is a union of two primitive sets, then they must necessarily be $z\Z_p$ and $z^{-1}\Z_p$. Thus, $\S=\FG^{\langle\rho, \sigma_m\rangle}$.  Consider then the case where $z\Z_p=C\cup D$ for primitive sets $C,D\in \D(\S)$. As $p$ is a prime, $\gcd(|C|,|D|)=1$. Let $D$ be chosen so that  $|D|>|C|$. By Lemma \ref{lem:primproduct}, we know that $\overline{C}\,\overline{D}=\lambda\overline{E}$, for some $E\in \D(\S)$ and $E\subseteq z^2\Z_p$. By Theorem \ref{lem:primfresh}, we know that the primitive sets forming $z^2\Z$ are $C^{(2)}$ and $D^{(2)}$. If $E=C^{(2)}$, then $|E|=|C|$ and $\lambda=|D|$, but this implies $\lambda>|C|$, which is a contradiction as the multiplicity of each term is bounded by $\min(|C|,|D|)$. If $E=D^{(2)}$, then $|E|=|D|$ and $\lambda=|C|$. Say that $C=zA$ for some $A\subseteq \Z_p$. Then  \[\overline{C}\,\overline{C}+\overline{C}\,\overline{D}=\overline{C}\,(z\overline{\Z_p})=z^2(\overline{A}\, \overline{\Z_p})=\lambda(z^2\overline{\Z_p})= \lambda\overline{C^{(2)}} + \lambda\overline{D^{(2)}}.\] Since $\overline{C}\,\overline{D}=\lambda\overline{D^{(2)}}$, we have that $\overline{C}\,\overline{C}=\lambda\overline{C^{(2)}}$, that is,  $\overline{C}^2=\lambda\overline{C^{(2)}}$. Thus, $\overline{A}\, (\overline{A^*})^* = \overline{A}\,\overline{A} = \lambda\overline{A^{(2)}}$. As every element in $A^2$ is a tycoon of $(A,A^*)$ ($\lambda = |A|$), if $\lambda\neq 1$ then $A=\Z_p$, which contradicts $\Z_p$ being a union of two primitive sets. Thus, $|A|=1=|C|$, and $\S$ is automorphic, as $C$ is a singleton.

 Now consider the case where $\varphi(\S)=\FZ^\pm$. As $z\Z_p$ is not an $\S$-subset, we need only consider the case $\{z, z^{-1}\}\Z_p=C\cup D$, where $C,D\in\D(\S)$. Let $C=zA\cup z^{-1}B$ and $D=zA'\cup z^{-1}B'$. Necessarily, $\gcd(|A|,|A'|)=1$ as $p$ is prime. Choose $D$ such that $|A| < |A'|$. Consider the product 
 \[\overline{C}\,\overline{D} =  (z^2\overline{A}\,\overline{A'}+z^{-2}\overline{B}\,\overline{B'})+(\overline{B}\,\overline{A'}+\overline{A}\,\overline{B'})\]
 Note that clearly $(z^2AA'\cup z^{-2}BB')\subseteq \{z^{2}, z^{-2}\}\Z_p$ and $(BA'\cup AB')\subseteq{\Z_p}$. By Theorem \ref{lem:primfresh}, the only primitive sets in $\{z^2, z^{-2}\}\Z_p$ are $C^{(2)}$ and $D^{(2)}$. This means that \[z^2\overline{A}\,\overline{A'}+z^{-2}\overline{B}\,\overline{B'}=\lambda \overline{C^{(2)}}+\mu \overline{D^{(2)}},\] for some integers $\lambda, \mu$.
Hence, 
 \[|A||A'| = \lambda|A^{(2)}|+\mu|A'^{(2)}|=\lambda|A| + \mu|A'|,\] which implies that $|A'| \mid \lambda$. But $\lambda$ must be an integer such that $0\le \lambda\le \min(|A|,|A'|) = |A|$, which implies that $\lambda=0$. Similarly, $\mu=|A|$ since $|A|\mid \mu$ and $0\le \mu\le |A|$ ($\lambda$ and $\mu$ cannot both be $0$). Therefore, 
 $z^2\overline{A}\,\overline{A'}+z^{-2}\overline{B}\,\overline{B'} = \mu\overline{D^{(2)}}$, that is, \[\overline{C}\,\overline{D} = \mu\overline{D}^{(2)} + \mathcal{X}\] for some $\mathcal{X}\in T(\S)$. 
  Next, consider  
  \[\overline{C}\,\overline{C} + \overline{C}\,\overline{D} = \overline{C}(z+z^{-1})\overline{\Z_p} = \mu(z^2+z^{-2})\overline{\Z_p}+2\mu\overline{\Z_p} = \mu(\overline{C^{(2)}}+\overline{D^{(2)}}) +2\mu\overline{\Z_p}.\] Considering those elements contained in $(z^2+z^{-2})\overline{\Z_p}$ coming from $\overline{C}\, \overline{C}$, we have \[\overline{C}\,\overline{C}=\mu\overline{C^{(2)}}+\mathcal{Y},\] for some $\mathcal{Y}\in T(\S)$. On the other hand, \[\overline{C}\,\overline{C}=(z\overline{A}+z^{-1}\overline{B})^2=z^2\overline{A}^2+z^{-2}\overline{B}^2+2\overline{A}\,\overline{B}.\] Comparing terms, we have $z^2\overline{A}^2+z^{-2}\overline{B}^2=\mu\overline{C^{(2)}}$. We also know $\overline{C^{(2)}}=(z\overline{A}+z^{-1}\overline{B})^{(2)}=z^2\overline{A^{(2)}}+z^{-2}\overline{B^{(2)}}$. So this means that $\overline{A}^2=\mu\overline{A^{(2)}}$. As $|A|=\mu$, we see all the elements of $A^2$ are tycoons in $(A,A^*)$. If $|A|>1$, then we again have that $A=\Z_p$, which contradicts $z\Z_p\cup z^{-1}\Z_p$ having two primitive subsets. Thus, $|A|=1$, which implies that $\S$ is automorphic by Corollary \ref{lem:mincase}.
 \end{proof}

 \section{Proof of Theorem \ref{thm:main}}\label{sec:proof}
 We summarize the results we have found thus far and how these provide the proof of Theorem \ref{thm:main}. In the special case that $G=\Z\times \Z_p$ for some prime $p$, we have a good understanding of the primitive sets of any Schur ring $\S$ over $G$, which we summarize here. Let $H$ be the maximal $\S$-subgroup contained in $\Z$. We have seen that if $H$ is trivial, then $\S$ is a wedge product by Corollary \ref{lem:trivial}, which is traditional. When $H$ is nontrivial, the Schur subring $\S_{H\times \Z_p}$ is necessarily automorphic by Theorem \ref{lem:allz-srings}. If there is a subgroup $K$ of $\Z$ such that $[K:H]=p$, then $\S$ is a wedge product. If $\S_{K\times \Z_p}$ is traditional, then so is $\S$. Therefore, we know the structure of all primitive sets except those in $\D(\S_{K\times\Z_p})\smallsetminus \D(\S_{H\times \Z_{p}})$. This is why Theorems \ref{lem:relprime}-\ref{thm:twosets} focused on the case where $H=\Z^{(p)}$.

 By Theorem \ref{lem:primfresh}, it suffices to determine the structure of the primitive subsets of $z\Z_p$ or $\{z,z^{-1}\}\Z_p$ (depending on the image $\varphi(\S)$). If $\S$ contains the primitive set $\{a^kz\}$ or $\{a^kz, a^{-k}z^{-1}\}$ for any integer $k$, then Corollary \ref{cor:allz-srings} applies and $\S$ is automorphic. If $\{z,z^{-1}\}\Z_p$ has any primitive subset of length two, then $\S$ is automorphic by Corollary \ref{lem:mincase}. Finally, since the torsion subgroup $\Z_p$ is order $p$, we know that $T(\S)$ is an automorphic Schur ring with all non-identity primitive sets having equal length of $m$. The proofs of Theorems \ref{lem:allz-srings} and \ref{lem:relprime} utilized this fact. Theorem \ref{lem:safeprime} will show in the case that $p$ is a Fermat or safe prime that $\S_{K\times \Z_p}$ is necessarily automorphic, which imply that $\S$ is traditional in this case. For general $p$, the integer $m$ is necessarily a divisor of $p-1$. In the case of a Fermat or safe prime, $p-1$ has very few divisors. This will complete the proof of Theorem \ref{thm:main}.

 \begin{Thm} \label{lem:safeprime}
 Suppose $G=\Z\times \Z_p$ where $p$ is a Fermat or safe prime. If $\S$ is a Schur ring over $G$ where $\Z^{(p)}$ is an $\S$-subgroup, then $\S$ is an automorphic Schur ring.
 \end{Thm}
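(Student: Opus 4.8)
The plan is to work through the (very short) list of divisors of $p-1$ and dispatch each possibility for $m := |\H|$, where $T(\S) = F[\Z_p]^{\H}$, reducing everything to earlier results except one residual configuration handled by a direct multiplicity computation. First I would clear away the easy reductions. If $\Z$ is itself an $\S$-subgroup, then Theorem \ref{lem:allz-srings} finishes the proof, so I may assume $H = \Z^{(p)}$ is the maximal $\S$-subgroup contained in $\Z$. Since $\H \le \Aut(\Z_p) \cong \Z_{p-1}$, we have $m \mid p-1$. If $m=1$, then the primitive subsets of $z\Z_p$ or of $\{z,z^{-1}\}\Z_p$ (depending on $\varphi(\S)$) have length coprime to $m$, so Corollary \ref{cor:relprime} applies; and if $m$ is a prime power, Theorem \ref{cor:relprime2} applies. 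When $p$ is a Fermat prime this already exhausts all cases, since $m$ then divides $2^{2^\ell}$; when $p = 2q+1$ is a safe prime with $q$ an odd prime, the divisors of $p-1 = 2q$ are $1, 2, q, 2q$, so the only case left is $m = 2q = p-1$.

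For that residual case, $T(\S)$ is the trivial Schur ring over $\Z_p$, so $\D(T(\S)) = \{\{1\}, \Z_p\smallsetminus 1\}$, and the key is to compute $\overline{C}\,\overline{C}^*$ for the primitive subsets $C$ lying over $z$. In the discrete case, write $z\Z_p = C_1 \cup \cdots \cup C_k$ with $C_i = zA_i$, the $A_i$ forming a partition of $\Z_p$. The element $\overline{A_i}\,\overline{A_i}^*$ lies in $T(\S)$ and has coefficient $|A_i|$ at the identity, hence equals $|A_i|\,\overline{\{1\}} + \frac{|A_i|(|A_i|-1)}{p-1}\overline{\Z_p\smallsetminus 1}$; integrality of the second coefficient forces $q \mid |A_i|(|A_i|-1)$, so $|A_i| \equiv 0$ or $1 \pmod q$. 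If some $|A_i| = 1$, then $C_i$ is a nonidentity singleton and Corollary \ref{cor:allz-srings} gives that $\S$ is automorphic; otherwise each $|A_i| \in \{q, q+1, 2q, 2q+1\}$ is at least $q$, and since $\sum_i |A_i| = p = 2q+1 < 3q$ we must have $k \le 2$. For $k = 2$, Theorem \ref{thm:twosets} applies. For $k=1$, the set $z\Z_p$ is an $\S$-class; using Theorem \ref{lem:allz-srings} to get $\S_{\Z^{(p)}\times\Z_p} = F[\Z^{(p)}] \times T(\S)$ and Theorem \ref{lem:primfresh} to get that every $z^j\Z_p$ with $p\nmid j$ is an $\S$-class, the partition $\D(\S)$ is completely determined, and one checks it is precisely the orbit partition of $\langle \rho, \sigma_r\rangle$ with $\sigma_r$ of order $p-1$, whence $\S = \FG^{\langle \rho, \sigma_r\rangle}$. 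The symmetric case runs in parallel: with $C_i = zA_i \cup z^{-1}B_i$ and $c_i := |A_i| = |B_i|$, the part of $\overline{C_i}\,\overline{C_i}^*$ supported on $\Z_p$ is $\overline{A_i}\,\overline{A_i}^* + \overline{B_i}\,\overline{B_i}^*$, of total weight $2c_i^2$ and identity-coefficient $2c_i$, which forces $q \mid c_i(c_i-1)$; a value $c_i = 1$ makes $|C_i| = 2$ and invokes Corollary \ref{lem:mincase}, and otherwise $c_i \ge q$ with $\sum_i c_i = 2q+1$ again forces $k \le 2$, handled by Theorem \ref{thm:twosets} for $k=2$ and by the identification $\S = \FG^{\langle \rho, \sigma_r, *\rangle}$ for $k=1$.

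The main obstacle will be the case $m = p-1$, and within it the divisibility observation $(p-1) \mid |A|(|A|-1)$ forced by the triviality of $T(\S)$: this is exactly what collapses the admissible partitions of $z\Z_p$ (or $\{z,z^{-1}\}\Z_p$) down to the one- and two-part cases, after which Theorems \ref{thm:twosets}, \ref{lem:allz-srings} and \ref{lem:primfresh} take over. The remaining work is to confirm that the fully pinned-down one-part partition is automorphic, which reduces to writing down the group $\langle \rho, \sigma_r\rangle$ (respectively $\langle \rho, \sigma_r, *\rangle$) and matching its orbits with the three visible families of primitive sets; this is routine once the structure of $\S_{\Z^{(p)}\times\Z_p}$ is in hand. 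Everything else is just the divisor arithmetic of $p-1$ for Fermat and safe primes.
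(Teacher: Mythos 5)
Your proposal is correct and follows essentially the same route as the paper: reduce to $m=2q$ for a safe prime via Theorem \ref{cor:relprime2}, use the triviality of $T(\S)$ to force $q\mid |A|(|A|-1)$ (the difference-set count), and then dispatch the surviving partitions of $z\Z_p$ or $\{z,z^{-1}\}\Z_p$ via Corollary \ref{cor:allz-srings}, Corollary \ref{lem:mincase}, and Theorem \ref{thm:twosets}, with the single-class case identified as $\FG^{\langle\rho,\sigma_r\rangle}$ (resp.\ $\FG^{\langle\rho,\sigma_r,*\rangle}$). Your bookkeeping via $\sum_i|A_i|=2q+1<3q$ is a slightly tidier packaging of the paper's case enumeration, but the argument is the same.
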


 \begin{proof}
 Let $H=\Z^{(p)}$. We know that $\S_{H\times \Z_p}$ is an automorphic Schur ring by Theorem \ref{lem:allz-srings}. Let each non-identity primitive set in $T(\S)$ have length $m\mid (p-1)$. If $p$ is a Fermat prime, then $m=2^\ell$ for some $\ell$. If $p=2q+1$ is a safe prime, then $m=1$, $2$, $q$, or $2q$. Thus, we will show that if $m=1$, $2^\ell$, $q$, or $2q$, then $\S$ is automorphic. By Theorem \ref{cor:relprime2}, we need only consider the case $m=2q$ (hence $p=2q+1$ is a safe prime). Note in this case $T(\S)$ is trivial.

 First consider the case where $\varphi(\S)=\FZ$.   Let $C=zA \subseteq z\Z_p$ be a  primitive set. If $|A|=1$, then $\S$ is automorphic. So, we assume $|A|>1$. Note $CC^*=AA^*\subseteq \Z_p$. Say $|A|=k$. By counting multiplicities in $\overline{A}\, \overline{A}^*$ and considering that $T(\S)$ is trivial, then there must be some positive integer $\lambda$ such that $k(k-1)=\lambda(p-1) = 2\lambda q$. In fact, this implies that $A$ is a difference set of $\Z_p$. Since $q\mid k$ or $q\mid (k-1)$, we know $k = 0$, $1$, $q$, $q+1$, $2q$, or $2q+1$. As $A$ is non-empty and $z\Z_p$ does not contain a primitive subset which is a singleton, we may rule out $k=0, 1, 2q$. If $k=2q+1=p$, then $\S=\FG^{\langle\rho, \sigma_r\rangle}$, where $r$ is a primitive root of $p$.  If $k=q+1$, then $z\Z_p$ contains a primitive subset whose length is either $1$ or $q$. Therefore, the only case that needs further pursuit is $k=q$. If $k=q$, then the other primitive set in $z\Z_p$ has length $q+1$, that is, $z\Z_p = C\cup D$ where $D\in \D(\S)$ and $|D|=q+1$. Thus, Theorem \ref{thm:twosets} shows that $\S$ is automorphic. 

 Using similar counting arguments, Corollary \ref{lem:mincase}, and Theorem \ref{thm:twosets} again, we see that $\S$ is also automorphic if $\varphi(\S)$ is symmetric. 
 \end{proof}

 \section{Connection between Schur rings over $\Z\times \Z_n$ and Difference Sets}\label{sec:ds}

 Consider a primitive subset of $z\Z_v$ contained in a Schur ring over $G=\Z\times \Z_v$ (we are assuming the discrete case, as the symmetric case is similar), say $C=zD$. Say $|D|=k$. Suppose that the Schur subring $T(\S)$ is trivial. Then $\overline{C}\,\overline{C}^*\in T(\S)$, that is, $\overline{C}\,\overline{C}^* = n+\lambda \overline{\Z_v}$ for some integer $\lambda$ and $n=k-\lambda$. This implies that $D$ is a difference set of $\Z_v$, as mentioned in the proof of Theorem \ref{lem:safeprime}. As the primitive subset $C$ was arbitrary, $\Z_v$ must be a union of disjoint difference sets in this case. 

 \begin{Def} We say that a partition $\D$ of a finite group $G$ is a \emph{difference partition} if each block in $\D$ is itself a difference set.
 \end{Def}

 There are many simple examples of difference partitions. For example, $\{\{G\}\}$ and $\{\{g\} \mid g\in G\}$ are difference partitions, as both $G$ and $\{g\}$ are trivial difference sets. Likewise, $\{\{g\}, G\smallsetminus\{g\}\}$ is a difference partition for any $g\in G$. Generalizing this last example, let $D$ be any difference set of $G$, then $\{D, G\smallsetminus D\}$ is a difference partition. Also, $\{D\} \cup \{\{g\}\mid g\in G\smallsetminus D\}$ is another difference partition. Finally, let $D$ be the set of quadratic residues in $\Z_p$ for $p\equiv 3\pmod 4$, the Paley difference set. Then $\{\{1\}, D, D^*\}$ is also a difference partition (this example can be generalized using other Paley-Hadamard difference sets). Note that some of these difference partitions are associated to Schur rings over $\Z_v$. 

 Note that in all the previous examples of difference partitions, except maybe the complementary partition $\{D, G\smallsetminus D\}$, all of these difference partitions involve a trivial difference set, usually a singleton. Although the complementary partition $\{D, G\smallsetminus D\}$ does not necessarily  involve a trivial difference set over $G$, it is still quite trivial as a partition. 

 \begin{Def} We say that a difference partition is \emph{trivial} if either it contains a trivial difference set or contains exactly two blocks. Otherwise, we say that a difference partition is \emph{non-trivial}.
 \end{Def}

 As hinted above, all the examples of difference partitions listed herein are trivial. To provide an example of a non-trivial difference partition is more challenging. First, translates of the same difference set always have non-empty intersection, which makes translates unusable for forming a partition. Automorphic images of difference sets are, of course, difference sets, but often are equal to translates of the original difference sets. This theory of multipliers of a difference set is a well-studied topic. Thus, in order for a group to have a difference partition, almost certainly it will need at least two non-equivalent difference sets, a situation which is quite rare (the existence of a non-trivial difference set of a group is itself a fairly rare phenomenon). In the case of $\Z_p$, this is a requirement as $\Z_p$ cannot be partitioned using blocks of all the same size. The two smallest primes that even have two non-equivalent, non-trivial difference sets are $p=31$ and $p=307$, neither of which have block sizes that could form a non-trivial difference partition. It is natural to even ask if there is a non-trivial difference partition.

 \begin{quest} Given a cyclic group of prime order $\Z_p$, does there exist a non-trivial difference partition? How about over an arbitrary cyclic group? Or an arbitrary abelian group?
 \end{quest}

 The counting arguments used in the proof of Theorem \ref{lem:safeprime} show that there is no non-trivial difference partition over $\Z_p$ if $p$ is a Fermat or a safe prime.

 Of course, the partition of $z\Z_v$ in a Schur ring over $\Z\times \Z_v$ requires more than a difference partition. For example, if $zA, zB\in \D(\S)$, then $(zA)(zB)^* = AB^* \subseteq \Z_v$. If $A\neq B$, then $\overline{A}\, \overline{B}^* = \lambda \overline{\Z_p\smallsetminus 1}$. This implies that $|A||B| = \lambda(v-1)$, a similar formula to the classic formula of difference sets, namely $k(k-1)=\lambda(v-1)$. 

 When $T(\S)$ is not trivial, similar properties of partitions on $z\Z_v$ are required, and these partitions can be viewed as generalizations of difference partitions. This is analogous to the fact that Schur rings over $\Z_p$ in a way generalize difference sets over $\Z_p$. For example, the Schur ring over $\Z_p$ which corresponds to the unique automorphism of order $\dfrac{p-1}{2}$ consists of three primitive sets. When $p\equiv 3\pmod 4$, the non-identity classes are Paley difference sets. When $p\equiv 1 \pmod 4$, the non-identity classes are reversible partial difference sets. 

 Due to the remarks and examples given above, the consideration of non-trivial difference partitions will be necessary for further study of Schur rings over $\Z\times\Z_p$. They will likely also be of broader interest in the theory of Schur rings and algebraic combinatorics itself.

 \bibliographystyle{amsplain}
 \bibliography{Srings}
\end{document}